\definecolor{newcolor}{rgb}{.8,.349,.1}
\def\du{[\![}
\def\df{]\!]}
\def\ol{\overline}
\def\phib{\boldsymbol{\phi}}
\def\etab{\boldsymbol{\eta}}
\def\ie{\textit{i.e.}}
\newtheorem{lemma}{Lemma}[section]
\newtheorem{corollary}{Corollary}[section]
\newtheorem{theorem}{Theorem}[section]
\newdefinition{remark}{Remark}[section]
\newproof{proof}{Proof}
\begin{document}


\begin{frontmatter}

\title{Entropy stable DGSEM for nonlinear hyperbolic systems in nonconservative form with application to two-phase flows}

\author[rvt1]{Florent Renac\corref{cor1}}
\ead{florent.renac@onera.fr}
\cortext[cor1]{Corresponding author. Tel.: +33 1 46 73 37 44; fax.: +33 1 46 73 41 66.}
\address[rvt1]{DAAA, ONERA, Universit\'e Paris Saclay, F-92322 Ch\^atillon, France}


\begin{abstract}
In this work, we consider the discretization of nonlinear hyperbolic systems in nonconservative form with the high-order discontinuous Galerkin spectral element method (DGSEM) based on collocation of quadrature and interpolation points (Kopriva and Gassner, J. Sci. Comput., 44 (2010), pp.136--155; Carpenter et al., SIAM J. Sci. Comput., 36 (2014), pp.~B835-B867). We present a general framework for the design of such schemes that satisfy a semi-discrete entropy inequality for a given convex entropy function at any approximation order. The framework is closely related to the one introduced for conservation laws by Chen and Shu (J. Comput. Phys., 345 (2017), pp.~427--461) and relies on the modification of the integral over discretization elements where we replace the physical fluxes by entropy conservative numerical fluxes from Castro et al. (SIAM J. Numer. Anal., 51 (2013), pp.~1371--1391), while entropy stable numerical fluxes are used at element interfaces. Time discretization is performed with strong-stability preserving Runge-Kutta schemes. We use this framework for the discretization of two systems in one space-dimension: a $2\times2$ system with a nonconservative product associated to a linearly-degenerate field for which the DGSEM fails to capture the physically relevant solution, and the isentropic Baer-Nunziato model. For the latter, we derive conditions on the numerical parameters of the discrete scheme to further keep positivity of the partial densities and a maximum principle on the void fractions. Numerical experiments support the conclusions of the present analysis and highlight stability and robustness of the present schemes.
\end{abstract}

\begin{keyword}
nonconservative hyperbolic systems\sep entropy stable schemes \sep discontinuous Galerkin method \sep summation-by-parts \sep two-phase flows
\end{keyword}

\end{frontmatter}


%
%
\section{Introduction}

The discussion in this paper focuses on the high-order discretization of the Cauchy problem for nonlinear hyperbolic systems in nonconservative form:

\begin{subeqnarray}\label{eq:NL_model_pb}
 \partial_t {\bf u} + {\bf A}({\bf u})\partial_x{\bf u} &=& 0,\quad \mbox{in }\mathbb{R}\times(0,\infty),\\
 {\bf u}(\cdot,0) &=& {\bf u}_{0}(\cdot),\quad\mbox{in }\mathbb{R},
\end{subeqnarray}

\noindent where ${\bf u}(x,t)$  represents the vector of unknowns with values in the set of states $\Omega^a\subset\mathbb{R}^m$ and ${\bf A}:\Omega^a\ni{\bf u}\mapsto{\bf A}({\bf u})\in\mathbb{R}^m\times\mathbb{R}^m$ is a smooth matrix-valued function. We assume that system (\ref{eq:NL_model_pb}a) is strictly hyperbolic over the set of states. When there exists a flux function ${\bf f}:\Omega^a\rightarrow\mathbb{R}^m$ such that ${\bf A}({\bf u})={\bf f}'({\bf u})$ for all ${\bf u}$ in $\Omega^a$, (\ref{eq:NL_model_pb}a) can be written in conservative form for which the concept of weak solutions in the sense of distributions is used to define admissible solutions.

In the general case where ${\bf A}$ is not the Jacobian of a flux function, the theory of distributions do not apply which makes difficult to give a meaning to the nonconservative product ${\bf A}({\bf u})\partial_x{\bf u}$ at a point of discontinuity of the solution. The work by Dal Maso, Lefloch, and Murat \citep{dalmaso_etal_95} generalizes the notion of weak solutions from conservation laws to (\ref{eq:NL_model_pb}) and allows to define the nonconservative product for functions of bounded variations by extending the definition by Volpert \cite{volpert_67}. The definition is based on a family of Lipschitz paths $\phib:[0,1]\times\Omega^a\times\Omega^a\rightarrow\Omega^a$ satisfying the following properties:

\begin{equation}\label{eq:DLM_path}
 \phib(0;{\bf u}^-,{\bf u}^+)={\bf u}^-, \quad \phib(1;{\bf u}^-,{\bf u}^+)={\bf u}^+, \quad \phib(s;{\bf u},{\bf u})={\bf u}.
\end{equation}

We refer to \citep{dalmaso_etal_95} for the complete theory and requirements on the associated paths. Across a discontinuity of speed $\sigma$, the nonconservative product ${\bf A}({\bf u})\partial_x{\bf u}$ is then defined as the unique Borel measure defined by the so-called generalized Rankine-Hugoniot condition

\begin{equation}\label{eq:generalized_RH}
 \sigma\du{\bf u}\df = \int_0^1{\bf A}\big(\phib(s;{\bf u}^-,{\bf u}^+)\big)\partial_s\phib(s;{\bf u}^-,{\bf u}^+)ds,
\end{equation}

\noindent where $\du{\bf u}\df = {\bf u}^+-{\bf u}^-$, ${\bf u}^-$ and ${\bf u}^+$ are the left and right limits of ${\bf u}$ across the discontinuity. Note that the notion of weak solutions now depends on the family of paths in (\ref{eq:generalized_RH}) under consideration \cite{lefloch_89}.

Admissible weak solutions have to satisfy an entropy inequality 

\begin{equation}\label{eq:entropy_ineq_cont}
 \partial_t\eta({\bf u}) + \partial_xq({\bf u}) \leq 0,
\end{equation}

\noindent for the smooth entropy-entropy flux pair $(\eta,q)$ with $\eta(\cdot)$ a strictly convex function such that $\etab'({\bf u})^\top{\bf A}({\bf u})={\bf q}'({\bf u})^\top$ for all ${\bf u}$ in $\Omega^a$. In practice, it may be useful to also consider PDEs with both conservative and nonconservative terms because they require different approaches for their discretizations:

\begin{equation}\label{eq:PDE_cons_noncons}
 \partial_t{\bf u} + \partial_x{\bf f}({\bf u}) + {\bf c}({\bf u})\partial_x{\bf u} = 0,
\end{equation}

\noindent so for smooth solution we have ${\bf A}\equiv{\bf f}'+{\bf c}$ and the entropy pair satisfies $\etab'({\bf u})^\top\big({\bf f}'({\bf u})+{\bf c}({\bf u})\big)={\bf q}'({\bf u})^\top$ for ${\bf u}$ in $\Omega^a$.

The objective of this work is to develop a general method to design arbitrary high-order schemes for (\ref{eq:NL_model_pb}) that satisfy the entropy inequality (\ref{eq:entropy_ineq_cont}) at the semi-discrete level. We propose to use the discontinuous Galerkin spectral element method (DGSEM) based on the collocation between interpolation and quadrature points defined from Gauss-Lobatto quadrature rules \cite{kopriva_gassner10}. Using diagonal norm summation-by-parts (SBP) operators and the entropy conservative numerical fluxes from Tadmor \cite{tadmor87}, a semi-discrete entropy conservative DGSEM has been derived in \cite{carpenter_etal14}. The particular form of the SBP operators allows to take into account  the numerical quadratures that approximate integrals in the numerical scheme compared to other techniques that require their exact evaluation to satisfy the entropy inequality \cite{jiang_shu94,hiltebrand_mishra_14}. The work in \cite{chen_shu_17} provides a general framework for the design of entropy conservative and entropy stable DGSEM for the discretization of nonlinear systems of conservation laws. Numerical experiments highlight the benefits on stability and robustness of the computations, though this not guaranties to preserve neither the entropy stability at the discrete level, nor positivity of the numerical solution which is necessary to define the entropy. Designs of fully discrete entropy stable and positive DGSEM have been proposed in \cite{despres98,despres00,renac17a,renac17b}. A general framework for the design of entropy conservative and entropy stable schemes on simplex elements for sready-state conservation laws has been recently proposed in \cite{abgrall_18} that encompasses residual distribution schemes, discontinous and continuous Galerkin methods whith general quadrature formulas.

Some works rely on the discontinuous Galerkin approximation of nonconservative systems in the fields of either the shallow water flows \cite{gassner_etal_17,dumbser_casulli_13,tassi_etal_08}, or magnetohydrodynamics (MHD) \cite{liu_etal_18,dumbser_casulli_13}, or two-phase flows \cite{zwieten_etal_17,franquet_perrier12,franquet_perrier13,rhebergen_etal_08,h_de_frahan_etal_15,tokareva_toro_10,fraysse_etal_16,dumbser_casulli_13}, etc. Note that the works in \cite{gassner_etal_17} and \cite{liu_etal_18} use the DGSEM as discretization method and derive, respectively, high-order entropy conservative and well balanced discretization of the shallow water equations through skew-symmetric splitting techniques, and entropy stable schemes for the ideal compressible MHD equations by using two-point numerical fluxes from \cite{chandrashekar_klingenberg_16} at element interfaces and treating the nonconservative product as source terms without particular treatment. Though not exhaustive, we also refer to the works in \cite{abgrall_kumar_14,dumbser_boscheri_13,dumbser_etal_10,dumbser_loubere_16,tokareva_toro_16} and references therein as alternative techniques for high-order approximations of two-phase flows.

Here, we extend the work in \cite{chen_shu_17} to nonconservative products by using the two-point entropy conservative numerical fluxes in fluctuation form introduced in \cite{castro_etal_13}. This extension is clarified through the direct link between fluctuation fluxes and conservative fluxes in the case of conservation laws. The difficulty in the design of an entropy stable DGSEM lies in the treatment of the integrals over discretization elements which contain space derivatives of test functions whose sign cannot be controlled. The use of entropy conservative numerical fluxes in those integrals allows however to remove their contribution to the global entropy production in the element. The properties of high-order accuracy and approximation of the cell averaged numerical solution are more difficult to derive due to the specific form of the fluctuation fluxes. Indeed, the consistency condition has less physical meaning for fluctuation fluxes compared to conservation fluxes which require homogeneity properties in closed form. Moreover, even in the case of path-conservative fluxes \cite{pares_06} they require a priori knowledge of the underlying path. We thus introduce some assumptions on the form of the entropy conservative fluctuation fluxes and derive conditions on the scheme to keep high-order accuracy and a same semi-discrete scheme for the cell averaged approximate solution as in the original DGSEM. The method is fairly general and we provide examples of entropy conservative fluxes for nonconservative systems in various fields such as spray dynamics, gas dynamics, or two-phase flows. A deeper analysis is given for the discretization of two two-phase flow models in one space-dimension: a $2\times2$ system with a nonconservative product associated to a linearly-degenerate (LD) characteristic field, and the isentropic Baer-Nunziato model. We provide a numerical example where the original DGSEM applied to the former model is shown to fail to capture the entropy weak solution. The use of an entropy stable DGSEM scheme is here necessary to capture the correct solution and improve robustness of the computations. For the latter model, we further analyze the properties of the discrete scheme and derive conditions on the time step to keep positivity of the partial densities and a maximum principle on the void fractions. These properties hold for the cell averaged numerical solution and motivate the use of a posteriori limiters \cite{zhang_shu_10a,zhang_shu_10b} to extend them to nodal values within elements. Again, numerical experiments highlight stability and robustness improvement with the entropy stable scheme.

The paper is organized as follows. Section \ref{sec:DG_discr} presents the DGSEM for the space discretization of nonconservative systems (\ref{eq:NL_model_pb}) and its entropy stable version through the use of entropy conservative fluxes. In section \ref{sec:scheme_properties}, we derive the semi-discrete entropy inequality and give conditions on the numerical fluxes to keep high-order accuracy and the semi-discrete scheme for the cell averaged numerical solution. Various examples of entropy conservative fluxes are given in section~\ref{sec:examples} for different nonconservative systems. We further investigate the stability and robustness properties of an entropy stable DGSEM for the isentropic Baer-Nunziato model in section~\ref{sec:BN}. Numerical experiments with application to two-phase flows are given in section~\ref{sec:num_xp}. Finally, concluding remarks about this work are given in section \ref{sec:conclusions}.

%
%
\section{DGSEM formulation}\label{sec:DG_discr}

The DG method consists in defining a semi-discrete weak formulation of problem (\ref{eq:NL_model_pb}). The domain is discretized with a grid $\Omega_h=\cup_{j\in\mathbb{Z}}\kappa_j$ with cells $\kappa_j = [x_{j-\frac{1}{2}}, x_{j+\frac{1}{2}}]$, $x_{j+\frac{1}{2}}=jh$ and $h>0$ the space step (see Figure~\ref{fig:stencil_1D}) that we assume to be uniform without loss of generality.

\subsection{Numerical solution} We look for approximate solutions in the function space of discontinuous polynomials ${\cal V}_h^p=\{v_h\in L^2(\Omega_h):\;v_h|_{\kappa_{j}}\in{\cal P}_p(\kappa_{j}),\; \kappa_j\in\Omega_h\}$, where ${\cal P}_p(\kappa_{j})$ denotes the space of polynomials of degree at most $p$ in the element $\kappa_{j}$. The approximate solution to (\ref{eq:NL_model_pb}) is sought under the form

\begin{equation}\label{eq:num_sol}
 {\bf u}_h(x,t)=\sum_{l=0}^{p}\phi_j^l(x){\bf U}_j^{l}(t), \quad \forall x\in\kappa_{j},\, \kappa_j\in\Omega_h,\, t\geq0,
\end{equation}
\noindent where ${\bf U}_j^{0\leq l \leq p}$ constitute the degrees of freedom (DOFs) in the element $\kappa_j$. The subset $(\phi_j^0,\dots,\phi_j^{p})$ constitutes a basis of ${\cal V}_h^p$ restricted onto a given element. In this work we will use the Lagrange interpolation polynomials $\ell_{0\leq k\leq p}$ associated to the Gauss-Lobatto nodes over the segment $[-1,1]$: $s_0=-1<s_1<\dots<s_p=1$:

\begin{equation}\label{eq:lagrange_poly}
 \ell_k(s_l)=\delta_{k,l}, \quad 0\leq k,l \leq p,
\end{equation}
\noindent with $\delta_{k,l}$ the Kronecker symbol. The basis functions with support in a given element $\kappa_j$ thus write $\phi_j^k(x)=\ell_k(\sigma_j(x))$ where $\sigma_j(x)=2(x-x_j)/h$ and $x_j=(x_{j+\frac{1}{2}}+x_{j-\frac{1}{2}})/2$ denotes the center of the element.

The DOFs thus correspond to the point values of the solution: given $0\leq k\leq p$, $j$ in $\mathbb{Z}$, and $t\geq0$, we have ${\bf u}_h(x_j^k,t)={\bf U}_j^k(t)$ for $x_j^k=x_j+s_kh/2$. The left and right traces of the numerical solution at interfaces $x_{j\pm\frac{1}{2}}$ of a given element hence read (see Figure~\ref{fig:stencil_1D}):

\begin{subeqnarray}\label{eq:LR_traces}
 {\bf u}_{j+\frac{1}{2}}^-(t) &:=& {\bf u}_h(x_{j+\frac{1}{2}}^-,t) = {\bf U}_j^p(t), \quad \forall t\geq0,\\
 {\bf u}_{j-\frac{1}{2}}^+(t) &:=& {\bf u}_h(x_{j-\frac{1}{2}}^+,t) = {\bf U}_j^0(t), \quad \forall t\geq0.
\end{subeqnarray}

\begin{figure}
\begin{center}
\epsfig{figure=./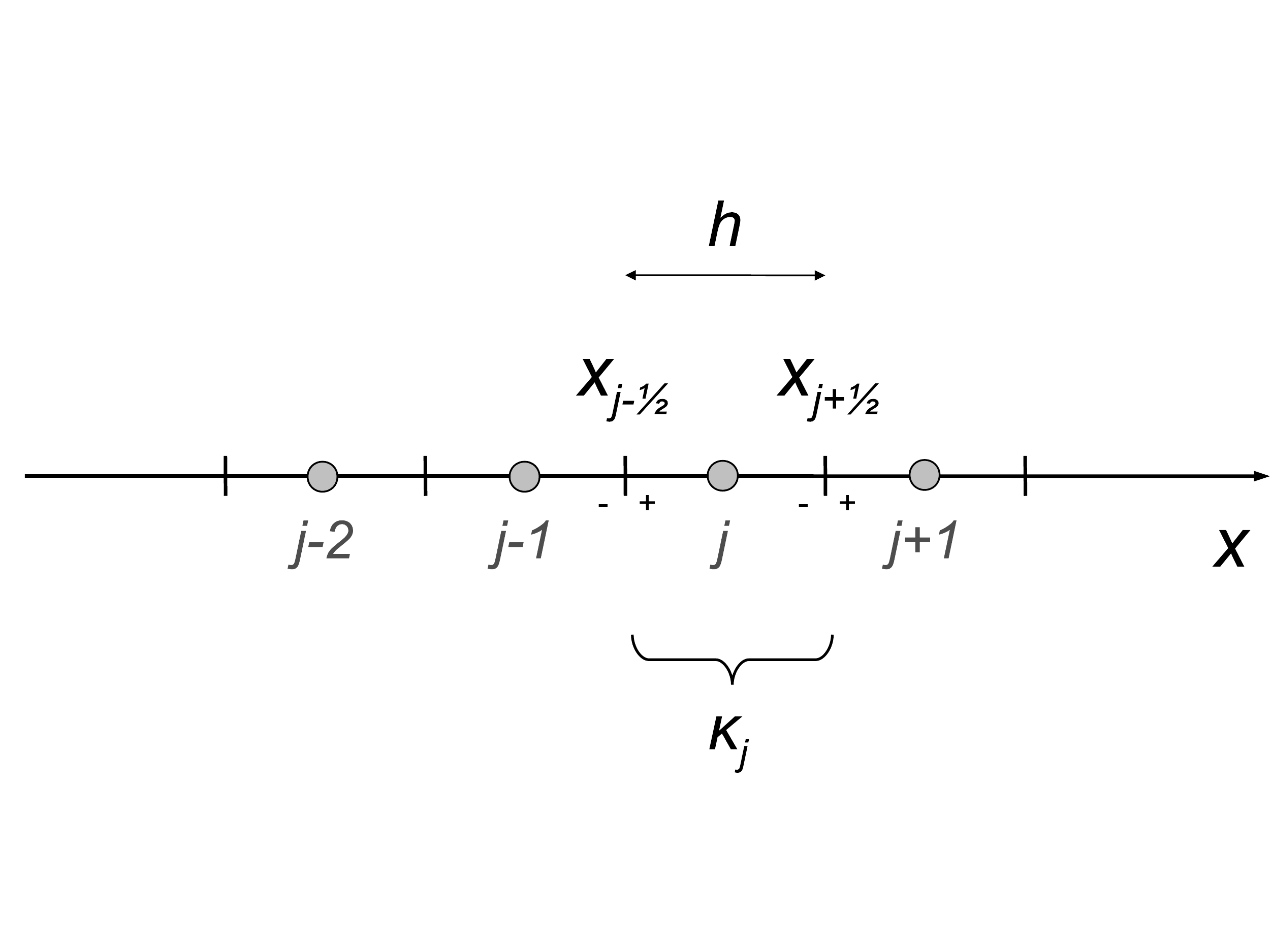,width=6cm,clip=true,trim=0cm 3.5cm 0cm 3.cm}
\caption{Mesh with definition of left and right traces at interfaces $x_{j\pm\frac{1}{2}}$.}
\label{fig:stencil_1D}
\end{center}
\end{figure}

It is convenient to introduce the difference matrix with entries

\begin{equation}\label{eq:nodalGL_diff_matrix}
 D_{kl} = \ell_l'(s_k) = \frac{h}{2}d_x\phi_j^l(x_j^k), \quad 0\leq k,l \leq p.
\end{equation}

In the DGSEM, the integrals over elements are approximated by using a Gauss-Lobatto quadrature rule with nodes collocated with the interpolation points of the numerical solution

\begin{equation}\label{eq:GaussLobatto_quad}
 \int_{\kappa_j}f(x)dx \simeq \frac{h}{2} \sum_{l=0}^p \omega_l f(x_j^l),
\end{equation}

\noindent with $\omega_l>0$, $\sum_{l=0}^p\omega_l=2$, $x_j^l=x_j+s_lh/2$ the weights and nodes of the quadrature rule, and $s_l$ defined in (\ref{eq:lagrange_poly}). This leads to the definition of the discrete inner product in the element $\kappa_j$

\begin{equation*}
 \langle f,g\rangle_j^p := \frac{h}{2}\sum_{l=0}^p\omega_lf(x_j^l)g(x_j^l).
\end{equation*}

As noticed in \cite{kopriva_gassner10}, the DGSEM satisfies the summation-by-parts property:

\begin{equation}\label{eq:SBP}
 \omega_kD_{kl}+\omega_lD_{lk}=\delta_{kl}(\delta_{kp}-\delta_{k0}), \quad 0\leq k,l \leq p.
\end{equation}

Note also that the property $\sum_{k=0}^p\ell_k\equiv1$ implies

\begin{equation}\label{eq:interp_lag_unite_deriv}
 \sum_{l=0}^pD_{kl} = 0, \quad 0\leq k\leq p.
\end{equation}

\subsection{Space discretization}

The semi-discrete form of the DG discretization in space of problem (\ref{eq:NL_model_pb}) reads \cite{franquet_perrier12,rhebergen_etal_08}: find ${\bf u}_h$ in $({\cal V}_h^p)^m$ such that

\begin{eqnarray}\label{eq:semi-discr_var_form}
 \int_{\Omega_h} v_h \partial_t{\bf u_h} dx &+& \sum_{\kappa_j\in\Omega_h}\int_{\kappa_j}v_h{\bf A}({\bf u}_h)\partial_x{\bf u}_hdx\nonumber\\
 &+& \sum_{j\in\mathbb{Z}}v_{j+\frac{1}{2}}^-{\bf D}^-\big({\bf u}_{j+\frac{1}{2}}^-(t),{\bf u}_{j+\frac{1}{2}}^+(t)\big)\nonumber\\ 
 &+& \sum_{j\in\mathbb{Z}}v_{j-\frac{1}{2}}^+{\bf D}^+\big({\bf u}_{j-\frac{1}{2}}^-(t),{\bf u}_{j-\frac{1}{2}}^+(t)\big) = 0, \quad \forall v_h\in{\cal V}_h^p, t>0,
\end{eqnarray}
\noindent where the numerical fluxes ${\bf D}^\pm(\cdot,\cdot)$ in fluctuation form will be defined below.

The projection of the initial condition (\ref{eq:NL_model_pb}b) onto $({\cal V}_h^p)^m$ reads

\begin{equation*}
 \int_{\Omega_h} v_h(x) {\bf u_h}(x,0) dx = \int_{\Omega_h} v_h(x) {\bf u}_0(x) dx, \quad \forall v_h\in{\cal V}_h^p.
\end{equation*}

Substituting $v_h$ for the Lagrange interpolation polynomials (\ref{eq:lagrange_poly}) and using the Gauss-Lobatto quadrature (\ref{eq:GaussLobatto_quad}) to approximate the volume integrals, (\ref{eq:semi-discr_var_form}) becomes

\begin{equation}\label{eq:semi-discr_DGSEM}
 \frac{\omega_kh}{2}\frac{d{\bf U}_j^k}{dt}+\omega_k{\bf A}({\bf U}_j^k)\sum_{l=0}^p{\bf U}_j^lD_{kl}+\delta_{kp}{\bf D}^-({\bf U}_j^p,{\bf U}_{j+1}^0)+\delta_{k0}{\bf D}^+({\bf U}_{j-1}^p,{\bf U}_{j}^0)=0,
\end{equation}

\noindent for all $j\in\mathbb{Z}$, $0\leq k\leq p$, and $t>0$. In section~\ref{sec:entropy_stable_flux}, we propose to modify the volume integral in (\ref{eq:semi-discr_DGSEM}) so as to satisfy an entropy balance. Note that the scheme (\ref{eq:semi-discr_ECPC_DGSEM}) satisfies a certain conservation property,

\begin{equation}\label{eq:mean_DGSEM}
 h\frac{d\langle{\bf u}\rangle_j}{dt}+\langle{\bf A}({\bf u}_h),d_x{\bf u}_h\rangle_j^p+{\bf D}^-({\bf U}_{j}^p,{\bf U}_{j+1}^0)+{\bf D}^+({\bf U}_{j-1}^p,{\bf U}_{j}^0)=0,
\end{equation}
\noindent for the cell averaged solution 

\begin{equation*}
 \langle{\bf u}\rangle_j(t) := \frac{1}{h}\int_{\kappa_j}{\bf u}_h(x,t)dx = \frac{1}{2}\sum_{k=0}^p\omega_k{\bf U}_j^k(t).
\end{equation*}

The numerical fluxes in fluctuation form satisfy the following consistency property

\begin{equation}\label{eq:consistent_flux}
 {\bf D}^\pm({\bf u},{\bf u}) = 0, \quad \forall {\bf u}\in\Omega^a,
\end{equation}
\noindent and may also satisfy the path-conservative property \cite{pares_06}

\begin{equation}\label{eq:castro_path_cons_flux}
 {\bf D}^-({\bf u}^-,{\bf u}^+)+{\bf D}^+({\bf u}^-,{\bf u}^+) = \int_0^1{\bf A}\big(\phib(s;{\bf u}^-,{\bf u}^+)\big)\partial_s\phib(s;{\bf u}^-,{\bf u}^+)ds,
\end{equation}
\noindent for a given path (\ref{eq:DLM_path}).

\subsection{Entropy stable numerical fluxes}\label{sec:entropy_stable_flux}

In the following, we use the usual terminology and denote by {\it entropy conservative} for the entropy-entropy flux pair $(\eta,q)$ in (\ref{eq:entropy_ineq_cont}), the numerical fluxes ${\bf D}_{ec}^\pm$ satisfying \cite{castro_etal_13}:

\begin{equation}\label{eq:entropy_conserv_flux}
 \etab'({\bf u}^-)^\top{\bf D}_{ec}^-({\bf u}^-,{\bf u}^+)+\etab'({\bf u}^+)^\top{\bf D}_{ec}^+({\bf u}^-,{\bf u}^+) = q({\bf u}^+)-q({\bf u}^-), \quad \forall {\bf u}^\pm\in\Omega^a.
\end{equation}

Furthermore, we will assume that the numerical fluxes at interfaces in (\ref{eq:semi-discr_DGSEM}) are {\it entropy stable} in the following sense:

\begin{equation}\label{eq:entropy_stable_flux}
 \etab'({\bf u}^-)^\top{\bf D}^-({\bf u}^-,{\bf u}^+)+\etab'({\bf u}^+)^\top{\bf D}^+({\bf u}^-,{\bf u}^+) \geq q({\bf u}^+)-q({\bf u}^-), \quad \forall {\bf u}^\pm\in\Omega^a.
\end{equation}

As done by Chen and Shu \cite{chen_shu_17} for hyperbolic conservation laws, we modify the volume integral in (\ref{eq:semi-discr_DGSEM}) to satisfy the entropy inequality at the semi-discrete level. The semi-discrete scheme now reads

\begin{equation}\label{eq:semi-discr_ECPC_DGSEM}
 \frac{\omega_kh}{2}\frac{d{\bf U}_j^k}{dt} + {\bf R}_j^k({\bf u}_h)=0, \quad \forall j\in\mathbb{Z}, \quad 0\leq k\leq p, \quad t>0,
\end{equation}
\noindent with

\begin{equation}\label{eq:discr_ECPC_DGSEM_BN_res}
 {\bf R}_j^k({\bf u}_h) = \omega_k\sum_{l=0}^p\tilde{\bf D}({\bf U}_j^k,{\bf U}_j^l)D_{kl}+\delta_{kp}{\bf D}^-({\bf U}_j^p,{\bf U}_{j+1}^0)+\delta_{k0}{\bf D}^+({\bf U}_{j-1}^p,{\bf U}_{j}^0),
\end{equation}
\noindent and

\begin{equation}\label{eq:ECPC_intvol_func}
 \tilde{\bf D}({\bf u}^-,{\bf u}^+):={\bf D}_{ec}^-({\bf u}^-,{\bf u}^+)-{\bf D}_{ec}^+({\bf u}^+,{\bf u}^-), \quad \forall {\bf u}^\pm\in\Omega^a,
\end{equation}
\noindent where ${\bf D}_{ec}^\pm(\cdot,\cdot)$ are some entropy conservative fluctuation fluxes (\ref{eq:entropy_conserv_flux}).

%
%
\section{Properties of the semi-discrete scheme}\label{sec:scheme_properties}

\subsection{Entropy stable scheme}

Theorem~\ref{th:entropy_inequ_DGSEM} proves a semi-discrete entropy inequality for the scheme (\ref{eq:semi-discr_ECPC_DGSEM}) together with entropy stable fluxes at interfaces, while Theorem~\ref{th:roe_type_fluxes} establishes high-order accuracy and the preservation of equation (\ref{eq:mean_DGSEM}) for the cell averaged solution.

\begin{theorem}[entropy stable DGSEM]\label{th:entropy_inequ_DGSEM}
Let $\tilde{\bf D}(\cdot,\cdot)$ defined in (\ref{eq:ECPC_intvol_func}) with ${\bf D}_{ec}^\pm(\cdot,\cdot)$ consistent (\ref{eq:consistent_flux}) and entropy conservative (\ref{eq:entropy_conserv_flux}) fluctuation fluxes, and let ${\bf D}^\pm(\cdot,\cdot)$ be consistent (\ref{eq:consistent_flux}) and entropy stable (\ref{eq:entropy_stable_flux}) fluctuation fluxes. Then, the semi-discrete DGSEM (\ref{eq:semi-discr_ECPC_DGSEM}) satisfies the following entropy inequality for the  pair $(\eta,q)$ in (\ref{eq:entropy_ineq_cont})

\begin{equation}\label{eq:entropy_inequ_DGSEM}
 h\frac{d\langle\eta\rangle_j}{dt}+Q({\bf U}_{j}^p,{\bf U}_{j+1}^0)-Q({\bf U}_{j-1}^p,{\bf U}_{j}^0)\leq0,
\end{equation}
\noindent with $\langle\eta\rangle_j=\sum_{k=0}^p\tfrac{\omega_k}{2}\eta({\bf U}_j^k)$ and either

\begin{equation}\label{eq:entropy_flux1}
 Q({\bf U}_{j}^p,{\bf U}_{j+1}^0) = q({\bf U}_{j}^p)+\etab'({\bf U}_{j}^p)^\top{\bf D}^-({\bf U}_j^p,{\bf U}_{j+1}^0),
\end{equation}
\noindent or

\begin{equation}\label{eq:entropy_flux2}
 Q({\bf U}_{j}^p,{\bf U}_{j+1}^0) = q({\bf U}_{j+1}^0)-\etab'({\bf U}_{j+1}^0)^\top{\bf D}^+({\bf U}_j^p,{\bf U}_{j+1}^0).
\end{equation}
\end{theorem}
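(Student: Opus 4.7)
The plan is to contract the semi-discrete equation (\ref{eq:semi-discr_ECPC_DGSEM}) against the entropy variables $\etab'({\bf U}_j^k)^\top$ at each node and sum over $k=0,\dots,p$. The time-derivative contribution immediately gives $h\,d\langle\eta\rangle_j/dt$ by the chain rule, since $\langle\eta\rangle_j=\tfrac{1}{2}\sum_k\omega_k\eta({\bf U}_j^k)$. The interface contribution collapses to $\etab'({\bf U}_j^p)^\top{\bf D}^-({\bf U}_j^p,{\bf U}_{j+1}^0)+\etab'({\bf U}_j^0)^\top{\bf D}^+({\bf U}_{j-1}^p,{\bf U}_j^0)$ thanks to the Kronecker deltas. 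The core of the argument is therefore the treatment of the volume term $S:=\sum_{k,l}\omega_k D_{kl}\,\etab'({\bf U}_j^k)^\top\tilde{\bf D}({\bf U}_j^k,{\bf U}_j^l)$.

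I would process $S$ by symmetrization. Using the SBP identity (\ref{eq:SBP}) to replace $\omega_k D_{kl}$ by $-\omega_l D_{lk}+\delta_{kl}(\delta_{kp}-\delta_{k0})$, then swapping the dummy indices in the resulting sum, gives a second expression for $S$; the diagonal terms vanish because $\tilde{\bf D}({\bf u},{\bf u})=0$ by consistency of ${\bf D}_{ec}^\pm$. Averaging the two expressions,
\begin{equation*}
2S=\sum_{k,l}\omega_k D_{kl}\Bigl[\etab'({\bf U}_j^k)^\top\tilde{\bf D}({\bf U}_j^k,{\bf U}_j^l)-\etab'({\bf U}_j^l)^\top\tilde{\bf D}({\bf U}_j^l,{\bf U}_j^k)\Bigr].
\end{equation*}
Substituting the definition (\ref{eq:ECPC_intvol_func}) of $\tilde{\bf D}$ reorders the four terms in the bracket into two sums of the form appearing in (\ref{eq:entropy_conserv_flux}), once with the pair $({\bf U}_j^k,{\bf U}_j^l)$ and once with $({\bf U}_j^l,{\bf U}_j^k)$, so that the entropy conservation property collapses the bracket to $2\bigl[q({\bf U}_j^l)-q({\bf U}_j^k)\bigr]$. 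The $q({\bf U}_j^k)$ piece disappears after summation in $l$ using (\ref{eq:interp_lag_unite_deriv}), while summation of $\sum_k \omega_k D_{kl}$ against $q({\bf U}_j^l)$ is evaluated from (\ref{eq:SBP}) combined again with (\ref{eq:interp_lag_unite_deriv}) as $\delta_{lp}-\delta_{l0}$, yielding $S=q({\bf U}_j^p)-q({\bf U}_j^0)$.

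Collecting everything produces the identity
\begin{equation*}
h\frac{d\langle\eta\rangle_j}{dt}+q({\bf U}_j^p)-q({\bf U}_j^0)+\etab'({\bf U}_j^p)^\top{\bf D}^-({\bf U}_j^p,{\bf U}_{j+1}^0)+\etab'({\bf U}_j^0)^\top{\bf D}^+({\bf U}_{j-1}^p,{\bf U}_j^0)=0.
\end{equation*}
Recognizing the first interface term plus $q({\bf U}_j^p)$ as $Q({\bf U}_j^p,{\bf U}_{j+1}^0)$ from (\ref{eq:entropy_flux1}), and adding and subtracting $Q({\bf U}_{j-1}^p,{\bf U}_j^0)$ in the same form, the remainder equals $-\bigl[\etab'({\bf U}_{j-1}^p)^\top{\bf D}^-+\etab'({\bf U}_j^0)^\top{\bf D}^++q({\bf U}_{j-1}^p)-q({\bf U}_j^0)\bigr]$ evaluated at the interface $x_{j-\frac{1}{2}}$, which is nonpositive by the entropy stability property (\ref{eq:entropy_stable_flux}). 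This establishes (\ref{eq:entropy_inequ_DGSEM}) with the flux (\ref{eq:entropy_flux1}); the alternative form (\ref{eq:entropy_flux2}) follows by absorbing the interface entropy-stability slack into the right interface instead of the left.

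The main obstacle is the manipulation of the volume sum $S$: it hinges on realizing that symmetrizing via SBP, together with the specific splitting (\ref{eq:ECPC_intvol_func}) of $\tilde{\bf D}$ into the two one-sided entropy conservative fluctuations, converts four "entropy-flux-potential" contributions into exactly the two pairs that (\ref{eq:entropy_conserv_flux}) telescopes. Everything else (time derivative, interface handling, final sign via entropy stability) is bookkeeping once this cancellation is in place.
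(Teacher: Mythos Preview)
Your proof is correct and follows essentially the same strategy as the paper: contract with entropy variables, use the SBP identity (\ref{eq:SBP}) together with consistency (\ref{eq:consistent_flux}) to rearrange the volume sum so that the entropy conservation relation (\ref{eq:entropy_conserv_flux}) telescopes it to $q({\bf U}_j^p)-q({\bf U}_j^0)$, and then absorb the interface slack via (\ref{eq:entropy_stable_flux}). The only cosmetic difference is the order of operations on the volume term: the paper first splits $\tilde{\bf D}$ via (\ref{eq:ECPC_intvol_func}), applies SBP only to the ${\bf D}_{ec}^+$ piece, and obtains a single pairing on which (\ref{eq:entropy_conserv_flux}) acts once; you instead symmetrize the whole sum first and then split, producing two pairings and invoking (\ref{eq:entropy_conserv_flux}) twice. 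Both routes are equivalent and rely on the same ingredients.
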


\begin{proof}
 Left multiplying (\ref{eq:semi-discr_ECPC_DGSEM}) with $\etab'({\bf U}_j^k)$ and adding up over $0\leq k\leq p$, we obtain

\begin{equation*}
 h\frac{d\langle\eta\rangle_j}{dt}+\sum_{k,l}\omega_k\etab'({\bf U}_{j}^k)^\top\tilde{\bf D}({\bf U}_j^k,{\bf U}_j^l)D_{kl}+\etab'({\bf U}_j^p)^\top{\bf D}^-({\bf U}_j^p,{\bf U}_{j+1}^0)+\etab'({\bf U}_j^0)^\top{\bf D}^+({\bf U}_{j-1}^p,{\bf U}_{j}^0)=0,
\end{equation*}
\noindent where the second term may be transformed into

\begin{subeqnarray*}
 \sum_{k,l}\omega_k\etab'({\bf U}_{j}^k)^\top\tilde{\bf D}({\bf U}_j^k,{\bf U}_j^l)D_{kl} &\overset{(\ref{eq:ECPC_intvol_func})}{=}& \sum_{k,l}\omega_k\etab'({\bf U}_{j}^k)^\top\big({\bf D}_{ec}^-({\bf U}_j^k,{\bf U}_j^l) - {\bf D}_{ec}^+({\bf U}_j^l,{\bf U}_j^k)\big)D_{kl}\\
 &\overset{(\ref{eq:SBP})}{=}& \sum_{k,l}\omega_k\etab'({\bf U}_{j}^k)^\top{\bf D}_{ec}^-({\bf U}_j^k,{\bf U}_j^l)D_{kl} + \omega_l\etab'({\bf U}_{j}^k)^\top{\bf D}_{ec}^+({\bf U}_j^l,{\bf U}_j^k)D_{lk} \\
 &-& \delta_{kl}(\delta_{kp}-\delta_{k0})\etab'({\bf U}_{j}^k)^\top{\bf D}_{ec}^+({\bf U}_j^l,{\bf U}_j^k)\\
 &\overset{(\ref{eq:consistent_flux})}{\underset{k\leftrightarrow l}{=}}& \sum_{k,l}\omega_k\big(\etab'({\bf U}_{j}^k)^\top{\bf D}_{ec}^-({\bf U}_j^k,{\bf U}_j^l) + \etab'({\bf U}_{j}^l)^\top{\bf D}_{ec}^+({\bf U}_j^k,{\bf U}_j^l)\big)D_{kl}\\
 &\overset{(\ref{eq:entropy_conserv_flux})}{=}& \sum_{k,l}\omega_k\big(q({\bf U}_{j}^l)-q({\bf U}_j^k) \big)D_{kl}\\
 &\overset{(\ref{eq:interp_lag_unite_deriv})}{=}& \sum_{k,l}\omega_k q({\bf U}_{j}^l)D_{kl}\\
 &=& q({\bf U}_{j}^p)-q({\bf U}_{j}^0),
\end{subeqnarray*}

\noindent where $k\leftrightarrow l$ indicates an inversion of indices $k$ and $l$ in some of the terms. We thus obtain

\begin{equation*}
 h\frac{d\langle\eta\rangle_j}{dt}+q({\bf U}_{j}^p)-q({\bf U}_{j}^0)+\etab'({\bf U}_j^p)^\top{\bf D}^-({\bf U}_j^p,{\bf U}_{j+1}^0)+\etab'({\bf U}_j^0)^\top{\bf D}^+({\bf U}_{j-1}^p,{\bf U}_{j}^0)=0,
\end{equation*}
\noindent and using (\ref{eq:entropy_flux1}) we deduce

\begin{subeqnarray*}
 h\frac{d\langle\eta\rangle_j}{dt}+Q({\bf U}_{j}^p,{\bf U}_{j+1}^0)-Q({\bf U}_{j-1}^p,{\bf U}_{j}^0) &=& q({\bf U}_{j}^0)-\etab'({\bf U}_{j}^0)^\top{\bf D}^+({\bf U}_{j-1}^p,{\bf U}_{j}^0) \\
 &-& q({\bf U}_{j-1}^p)-\etab'({\bf U}_{j-1}^p)^\top{\bf D}^-({\bf U}_{j-1}^p,{\bf U}_{j}^0) \overset{(\ref{eq:entropy_stable_flux})}{\leq} 0.
\end{subeqnarray*}

The same holds with (\ref{eq:entropy_flux2}).\quad$\square$
\end{proof}

Entropy conservation then results as an immediate consequence.

\begin{corollary}[entropy conservative fluxes]
 Under the assumptions of Theorem~\ref{th:entropy_inequ_DGSEM}, the semi-discrete DGSEM (\ref{eq:semi-discr_ECPC_DGSEM}) is entropy conservative iff. the numerical fluxes at interfaces are entropy conservative (\ref{eq:entropy_conserv_flux}). The numerical entropy flux reads

\begin{subeqnarray*}
 Q({\bf U}_{j}^p,{\bf U}_{j+1}^0) &=& q({\bf U}_{j}^p)+\etab'({\bf U}_{j}^p)^\top{\bf D}_{ec}^-({\bf U}_j^p,{\bf U}_{j+1}^0), \\
 &=& q({\bf U}_{j+1}^0)-\etab'({\bf U}_{j+1}^0)^\top{\bf D}_{ec}^+({\bf U}_j^p,{\bf U}_{j+1}^0).
\end{subeqnarray*}
\end{corollary}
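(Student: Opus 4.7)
The plan is to revisit the intermediate identity
\[
 h\frac{d\langle\eta\rangle_j}{dt}+q({\bf U}_{j}^p)-q({\bf U}_{j}^0)+\etab'({\bf U}_j^p)^\top{\bf D}^-({\bf U}_j^p,{\bf U}_{j+1}^0)+\etab'({\bf U}_j^0)^\top{\bf D}^+({\bf U}_{j-1}^p,{\bf U}_{j}^0)=0
\]
already derived inside the proof of Theorem~\ref{th:entropy_inequ_DGSEM}. This step relies solely on the SBP property together with consistency and entropy conservation of the volume fluctuation fluxes $\tilde{\bf D}$, not on the nature of the interface fluxes, so it is available whether or not ${\bf D}^\pm$ are entropy conservative.

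For the sufficiency direction, I would assume that the interface fluxes satisfy (\ref{eq:entropy_conserv_flux}) and define $Q$ through (\ref{eq:entropy_flux1}). Then I would compute the telescoping difference $Q({\bf U}_j^p,{\bf U}_{j+1}^0)-Q({\bf U}_{j-1}^p,{\bf U}_j^0)$, substitute it into the identity above, and collect the boundary contributions at the interface $x_{j-1/2}$. The hypothesis (\ref{eq:entropy_conserv_flux}) applied at the pair $({\bf U}_{j-1}^p,{\bf U}_j^0)$ would then cancel exactly the residual $q({\bf U}_j^0)-q({\bf U}_{j-1}^p)-\etab'({\bf U}_j^0)^\top{\bf D}^+({\bf U}_{j-1}^p,{\bf U}_j^0)-\etab'({\bf U}_{j-1}^p)^\top{\bf D}^-({\bf U}_{j-1}^p,{\bf U}_j^0)$, yielding the equality $h\,d\langle\eta\rangle_j/dt+Q({\bf U}_j^p,{\bf U}_{j+1}^0)-Q({\bf U}_{j-1}^p,{\bf U}_j^0)=0$. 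The equivalence with the alternative form (\ref{eq:entropy_flux2}) is then immediate by adding/subtracting (\ref{eq:entropy_conserv_flux}) at the pair $({\bf U}_j^p,{\bf U}_{j+1}^0)$, which shows that the two candidate flux expressions coincide.

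For the necessity direction, I would assume the scheme is entropy conservative, in the sense that there exists a consistent two-point numerical entropy flux $Q$ such that $h\,d\langle\eta\rangle_j/dt+Q({\bf U}_j^p,{\bf U}_{j+1}^0)-Q({\bf U}_{j-1}^p,{\bf U}_j^0)=0$ for every cell and every admissible configuration. Subtracting from the intermediate identity yields a local balance whose only surviving contributions at the interface $x_{j-1/2}$ are exactly the left-hand side of (\ref{eq:entropy_conserv_flux}) minus $q({\bf U}_j^0)-q({\bf U}_{j-1}^p)$. Since the traces ${\bf U}_{j-1}^p$ and ${\bf U}_j^0$ may be chosen independently in $\Omega^a$, this algebraic identity must hold pointwise, which is precisely (\ref{eq:entropy_conserv_flux}).

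The main obstacle is mostly bookkeeping: one has to be careful that the identity produced by the SBP manipulation is truly independent of whether the interface fluxes are conservative or merely stable, and that the argument for necessity uses the freedom to vary the interface states rather than cell-by-cell cancellations that could in principle hide a non-conservative compensation. Once that is in place, the rest of the proof amounts to a direct algebraic substitution using (\ref{eq:entropy_conserv_flux}).
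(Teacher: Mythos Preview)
Your proposal is correct and follows the same line as the paper, which treats the corollary as an immediate consequence of the intermediate identity established in the proof of Theorem~\ref{th:entropy_inequ_DGSEM} and gives no separate argument. Your sufficiency direction and the identification of the two forms of $Q$ via (\ref{eq:entropy_conserv_flux}) are exactly the intended reasoning; for necessity (which the paper does not spell out) your separation-of-variables idea is the right one, though the cleanest way to phrase it is that the subtraction yields $G({\bf U}_j^p,{\bf U}_{j+1}^0)+H({\bf U}_{j-1}^p,{\bf U}_j^0)=0$ with $G,H$ depending on disjoint arguments, forcing each to vanish by consistency and hence pinning down $Q$ as in (\ref{eq:entropy_flux1})--(\ref{eq:entropy_flux2}).
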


High-order accuracy and the conservation-like property (\ref{eq:mean_DGSEM}) require further assumptions on the form of the entropy conservative fluxes (\ref{eq:ECPC_intvol_func}) which are summarized in Theorem~\ref{th:roe_type_fluxes} below. We stress that this form of fluctuation fluxes is fairly general and includes for instance skew-symmetric splittings (see Corollary~\ref{th:skew-sym-splitting}).

\begin{theorem}\label{th:roe_type_fluxes}
 Under the assumptions of Theorem~\ref{th:entropy_inequ_DGSEM} and further assuming that the entropy conservative fluctuation fluxes have the following form

\begin{subeqnarray}\label{eq:roe_type_fluxes}
 {\bf D}_{ec}^\pm({\bf u}^-,{\bf u}^+) &=& {\bf\cal A}^\pm({\bf u}^-,{\bf u}^+)\du {\bf u} \df,\\
 {\bf\cal A}({\bf u}^-,{\bf u}^+) &:=& {\bf\cal A}^-({\bf u}^-,{\bf u}^+)+{\bf\cal A}^+({\bf u}^-,{\bf u}^+),\\
 {\bf\cal A}({\bf u}^-,{\bf u}^+) + {\bf\cal A}({\bf u}^+,{\bf u}^-)  &=& {\bf A}({\bf u}^-) + {\bf A}({\bf u}^+),\\
 {\bf\cal A}({\bf u},{\bf u}) &=& {\bf A}({\bf u}),
\end{subeqnarray}
\noindent for all ${\bf u}^\pm$ and ${\bf u}$ in $\Omega^a$. Then, the semi-discrete DGSEM (\ref{eq:semi-discr_ECPC_DGSEM}) is a high-order approximation in space of the nonconservative equation (\ref{eq:NL_model_pb}a) which satisfies (\ref{eq:mean_DGSEM}).
\end{theorem}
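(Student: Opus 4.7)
The theorem contains two claims---the cell-averaged identity (\ref{eq:mean_DGSEM}) and the preservation of the spatial order of accuracy---and I would treat them in that order, exploiting the algebraic structure of (\ref{eq:roe_type_fluxes}) in the same index-swap/SBP style as the proof of Theorem~\ref{th:entropy_inequ_DGSEM}.

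For (\ref{eq:mean_DGSEM}) I sum (\ref{eq:semi-discr_ECPC_DGSEM}) over $0\leq k\leq p$: the time derivative contributes $h\,d_t\langle{\bf u}\rangle_j$, the Kronecker-delta interface fluctuation fluxes collapse onto the two surface terms of (\ref{eq:mean_DGSEM}), and the whole work is to establish
$$S_j := \sum_{k,l=0}^p \omega_k \tilde{\bf D}({\bf U}_j^k,{\bf U}_j^l) D_{kl} = \langle{\bf A}({\bf u}_h),d_x{\bf u}_h\rangle_j^p.$$
Following the template of Theorem~\ref{th:entropy_inequ_DGSEM}, I relabel $k\leftrightarrow l$ in $S_j$, use SBP (\ref{eq:SBP}) to rewrite $\omega_lD_{lk}=\delta_{kl}(\delta_{kp}-\delta_{k0})-\omega_kD_{kl}$, and kill the resulting boundary term via consistency $\tilde{\bf D}({\bf u},{\bf u})=0$. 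Adding the original and the rewritten sums yields $2S_j = \sum_{k,l}\omega_k\big[\tilde{\bf D}({\bf U}_j^k,{\bf U}_j^l)-\tilde{\bf D}({\bf U}_j^l,{\bf U}_j^k)\big]D_{kl}$. Expanding via (\ref{eq:ECPC_intvol_func}) and applying (\ref{eq:roe_type_fluxes}a)--(\ref{eq:roe_type_fluxes}c) simplifies the antisymmetric bracket to
$$\tilde{\bf D}({\bf u}^-,{\bf u}^+) - \tilde{\bf D}({\bf u}^+,{\bf u}^-) = \big[{\bf A}({\bf u}^-)+{\bf A}({\bf u}^+)\big]\du{\bf u}\df.$$
Substituting back and invoking (\ref{eq:interp_lag_unite_deriv}) to discard every summand proportional to the $l$-constant ${\bf U}_j^k$, the half of $2S_j$ weighted by ${\bf A}({\bf U}_j^k)$ is already $\langle{\bf A}({\bf u}_h),d_x{\bf u}_h\rangle_j^p$; one further index swap combined with SBP---whose boundary contribution $\sum_k(\delta_{kp}-\delta_{k0}){\bf A}({\bf U}_j^k)({\bf U}_j^k-{\bf U}_j^k)=0$ vanishes trivially---converts the ${\bf A}({\bf U}_j^l)$-weighted half into the same quantity, giving $S_j=\langle{\bf A}({\bf u}_h),d_x{\bf u}_h\rangle_j^p$.

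For the accuracy claim I would compare (\ref{eq:semi-discr_ECPC_DGSEM}) against the classical DGSEM (\ref{eq:semi-discr_DGSEM}), which is a known high-order approximation of (\ref{eq:NL_model_pb}a) for smooth data. Only the nodal volume residual changes, so it suffices to control the pointwise discrepancy. Using (\ref{eq:roe_type_fluxes}a) together with (\ref{eq:roe_type_fluxes}d) and the smoothness of ${\bf\cal A}^\pm$, a Taylor expansion around the diagonal produces $\tilde{\bf D}({\bf u},{\bf v}) = {\bf A}({\bf u})({\bf v}-{\bf u}) + O(|{\bf v}-{\bf u}|^2)$, and (\ref{eq:interp_lag_unite_deriv}) absorbs the ${\bf U}_j^k$ contribution, so the modified nodal residual equals the classical one plus a correction one order higher in the mesh size---the consistency order of the DGSEM is retained.

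The main obstacle is Step~1: the symmetrization must be invoked twice (once to symmetrize $\tilde{\bf D}$ through (\ref{eq:roe_type_fluxes}c), once more to reconcile the ${\bf A}({\bf U}_j^l)$-weighted half of $2S_j$ with the ${\bf A}({\bf U}_j^k)$-weighted one), and one must verify at each invocation that the boundary term produced by SBP is killed either by $\tilde{\bf D}({\bf u},{\bf u})=0$ or by the trivial identity ${\bf U}_j^k-{\bf U}_j^k=0$. Once the algebraic identity underlying (\ref{eq:mean_DGSEM}) is secured, the accuracy claim is a routine Taylor-expansion consequence of assumptions (\ref{eq:roe_type_fluxes}a) and (\ref{eq:roe_type_fluxes}d).
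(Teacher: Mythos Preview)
Your treatment of the cell-averaged identity (\ref{eq:mean_DGSEM}) is correct and is essentially the paper's argument in a slightly different packaging: where the paper symmetrizes to obtain ${\bf D}_{ec}^-+{\bf D}_{ec}^+={\bf\cal A}({\bf U}_j^k,{\bf U}_j^l)({\bf U}_j^l-{\bf U}_j^k)$ and then invokes (\ref{eq:roe_type_fluxes}c) after a second SBP step, you antisymmetrize $\tilde{\bf D}$ first, use (\ref{eq:roe_type_fluxes}c) to reach $[{\bf A}({\bf U}_j^k)+{\bf A}({\bf U}_j^l)]\du{\bf u}\df$ directly, and then close with a second SBP step. Both routes use SBP twice and kill the boundary terms with the same two mechanisms (consistency $\tilde{\bf D}({\bf u},{\bf u})=0$ and the diagonal identity ${\bf U}_j^k-{\bf U}_j^k=0$), so there is no substantive difference.

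The accuracy part, however, has a genuine gap. Your Taylor expansion $\tilde{\bf D}({\bf u},{\bf v})={\bf A}({\bf u})({\bf v}-{\bf u})+O(|{\bf v}-{\bf u}|^2)$ is correct, but plugging it into the nodal volume residual gives
\[
\frac{2}{h}\sum_{l}\tilde{\bf D}({\bf U}_j^k,{\bf U}_j^l)D_{kl}
= {\bf A}({\bf U}_j^k)\,d_x{\bf u}_h(x_j^k) + \frac{2}{h}\sum_l O(h^2)\,D_{kl}
= {\bf A}({\bf U}_j^k)\,d_x{\bf u}_h(x_j^k) + O(h),
\]
since $|{\bf U}_j^l-{\bf U}_j^k|=O(h)$ for smooth data while $D_{kl}=O(1)$. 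That is only a \emph{first-order} match with the classical DGSEM volume term, so ``one order higher in the mesh size'' does not suffice to retain $p$th-order consistency. There is no hidden cancellation in the quadratic remainder that would lift it to $O(h^p)$; the remainder $[{\bf\cal A}^-({\bf U}_j^k,{\bf U}_j^l)+{\bf\cal A}^+({\bf U}_j^l,{\bf U}_j^k)-{\bf A}({\bf U}_j^k)]({\bf U}_j^l-{\bf U}_j^k)$ is generically of size $h^2$ termwise.

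The paper closes this gap by a different, interpolation-theoretic device: it views $\sum_l{\bf\cal A}^\pm({\bf U}_j^k,{\bf U}_j^l){\bf U}_j^lD_{kl}$ as the collocation derivative of the Lagrange interpolant of the \emph{product} ${\bf\cal A}^\pm({\bf u}(x_j^k),{\bf u}(\cdot)){\bf u}(\cdot)$, and then uses the $O(h^{p+1})$ interpolation error (hence $O(h^p)$ in the derivative) together with the exact product rule to obtain the approximate chain rules (\ref{eq:approx_chain_rule}). Subtracting the analogous sum with the factor ${\bf U}_j^k$ instead of ${\bf U}_j^l$ cancels the extra product-rule term, leaving ${\bf\cal A}^\pm({\bf U}_j^k,{\bf U}_j^k)\,d_x{\bf u}_h(x_j^k)+O(h^p)$. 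Summing the $\pm$ contributions and invoking (\ref{eq:roe_type_fluxes}b,d) then yields the full ${\bf A}({\bf U}_j^k)\,d_x{\bf u}_h(x_j^k)+O(h^p)$. You need this (or an equivalent high-order argument), not a two-term Taylor expansion, to establish the accuracy claim.
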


\begin{proof}
First, to prove accuracy, it is sufficient to prove that the volume integral in (\ref{eq:semi-discr_ECPC_DGSEM}) is a high-order approximation of ${\bf A}({\bf u})\partial_x{\bf u}$ at points $x_j^k$, $0\leq k\leq p$, for smooth enough solutions ${\bf u}$. Let $\pi_h^p:L^2(\Omega_h)\ni u\mapsto\pi_h^p(u)\in{\cal V}_h^p$ be the Lagrange projection onto ${\cal V}_h^p$ associated to nodes (\ref{eq:lagrange_poly}). Since the Lagrange interpolation error is of order ${\cal O}(h^{p+1})$, we have for $u$ and $v$ in ${\cal C}^{p+1}(\Omega_h)$:

\begin{equation}\label{eq:lag_interp_deriv_product}
 d_x\pi_h^p(uv)(x)=u(x)d_xv(x)+v(x)d_xu(x) + {\cal O}(h^p), \quad \forall x\in\Omega_h.
\end{equation}

Let $t>0$, introducing the interpolation polynomial ${\bf a}_h^k(x):=\sum_{l=0}^p{\bf\cal A}^-({\bf U}_j^k,{\bf U}_j^l)\phi_j^l(x)$, we have ${\bf a}_h^k(x_j^k)={\bf\cal A}^-({\bf U}_j^k,{\bf U}_j^k)$ and $d_x{\bf a}_h^k(x_j^k)=\sum_{l=0}^p{\bf\cal A}^-({\bf U}_j^k,{\bf U}_j^l)d_x\phi_j^l(x_j^k)$. Using (\ref{eq:lag_interp_deriv_product}) for the product ${\bf a}_h^k{\bf u}_h$, we obtain

\begin{subeqnarray*}
 \frac{2}{h}\sum_{l=0}^p{\bf\cal A}^-({\bf U}_j^k,{\bf U}_j^l){\bf U}_j^lD_{kl} = {\bf\cal A}^-({\bf U}_j^k,{\bf U}_j^k)d_x{\bf u}_h(x_j^k) + d_x{\bf a}_h^k(x_j^k){\bf U}_j^k + {\cal O}(h^p).
\end{subeqnarray*}

Applying the same rule for ${\bf a}_h^k(x):=\sum_{l=0}^p{\bf\cal A}^+({\bf U}_j^l,{\bf U}_j^k)\phi_j^l(x)$, we finally obtain

\begin{subeqnarray}\label{eq:approx_chain_rule}
 \frac{2}{h}\sum_{l=0}^p{\bf\cal A}^-({\bf U}_j^k,{\bf U}_j^l)({\bf U}_j^l-{\bf U}_j^k)D_{kl} &=& {\bf\cal A}^-({\bf U}_j^k,{\bf U}_j^k)d_x{\bf u}_h(x_j^k) + {\cal O}(h^p),\\
 \frac{2}{h}\sum_{l=0}^p{\bf\cal A}^+({\bf U}_j^l,{\bf U}_j^k)({\bf U}_j^l-{\bf U}_j^k)D_{kl} &=& {\bf\cal A}^+({\bf U}_j^k,{\bf U}_j^k)d_x{\bf u}_h(x_j^k) + {\cal O}(h^p).
\end{subeqnarray}

We thus have

\begin{subeqnarray*}
 \frac{2}{h}\sum_{l=0}^p\tilde{\bf D}({\bf U}_j^k,{\bf U}_j^l)D_{kl} &\overset{(\ref{eq:roe_type_fluxes}a)}{=}& \frac{2}{h}\sum_{l=0}^p\big({\bf\cal A}^-({\bf U}_j^k,{\bf U}_j^l)+{\bf\cal A}^+({\bf U}_j^l,{\bf U}_j^k)\big)({\bf U}_j^l-{\bf U}_j^k)D_{kl} \\
 &\overset{(\ref{eq:approx_chain_rule})}{=}& \big({\bf\cal A}^-({\bf U}_j^k,{\bf U}_j^k)+{\bf\cal A}^+({\bf U}_j^k,{\bf U}_j^k)\big)d_x{\bf u}_h(x_j^k) + {\cal O}(h^{p})\\
 &\overset{(\ref{eq:roe_type_fluxes}b)}{=}& {\bf\cal A}({\bf U}_j^k,{\bf U}_j^k)d_x{\bf u}_h(x_j^k) + {\cal O}(h^{p})\\
 &\overset{(\ref{eq:roe_type_fluxes}d)}{=}& {\bf A}({\bf U}_j^k)d_x{\bf u}_h(x_j^k) + {\cal O}(h^{p}).
\end{subeqnarray*}

Then, to obtain (\ref{eq:mean_DGSEM}), we add up (\ref{eq:semi-discr_ECPC_DGSEM}) over $0\leq k\leq p$ and obtain

\begin{equation*}
  h\frac{d\langle{\bf u}\rangle_j}{dt}+\sum_{k=0}^p\omega_k\sum_{l=0}^p\tilde{\bf D}({\bf U}_j^k,{\bf U}_j^l)D_{kl}+{\bf D}^-({\bf U}_{j}^p,{\bf U}_{j+1}^0)+{\bf D}^+({\bf U}_{j-1}^p,{\bf U}_{j}^0)=0,
\end{equation*}
\noindent where the second term may be transformed into

\begin{subeqnarray*}
 \sum_{k,l}\omega_k\tilde{\bf D}({\bf U}_j^k,{\bf U}_j^l)D_{kl} &\overset{(\ref{eq:ECPC_intvol_func})}{=}& \sum_{k,l}\omega_k\big({\bf D}_{ec}^-({\bf U}_j^k,{\bf U}_j^l)-{\bf D}_{ec}^+({\bf U}_j^l,{\bf U}_j^k)\big)D_{kl}\\
 &\overset{(\ref{eq:SBP})}{=}& \sum_{k,l}\omega_k{\bf D}_{ec}^-({\bf U}_j^k,{\bf U}_j^l)D_{kl}+\omega_l{\bf D}_{ec}^+({\bf U}_j^l,{\bf U}_j^k)D_{lk}\\
 &-& \delta_{kl}(\delta_{kp}-\delta_{k0}){\bf D}_{ec}^+({\bf U}_j^l,{\bf U}_j^k)\\
 &\overset{(\ref{eq:consistent_flux})}{\underset{k\leftrightarrow l}{=}}& \sum_{k,l}\omega_k\big({\bf D}_{ec}^-({\bf U}_j^k,{\bf U}_j^l)+{\bf D}_{ec}^+({\bf U}_j^k,{\bf U}_j^l)\big)D_{kl}\\
 &\overset{(\ref{eq:roe_type_fluxes}a,b)}{=}& \sum_{k,l}\omega_k{\bf\cal A}({\bf U}_j^k,{\bf U}_j^l)({\bf U}_j^l-{\bf U}_j^k)D_{kl}\\
 &\overset{(\ref{eq:SBP})}{\underset{(\ref{eq:roe_type_fluxes}d)}{=}}& \sum_{k,l}\omega_k{\bf\cal A}({\bf U}_j^k,{\bf U}_j^l){\bf U}_j^lD_{kl}+\omega_l{\bf\cal A}({\bf U}_j^k,{\bf U}_j^l){\bf U}_j^kD_{lk}\\
 &-& {\bf A}({\bf U}_j^p){\bf U}_j^p + {\bf A}({\bf U}_j^0){\bf U}_j^0 \\
 &\overset{(\ref{eq:roe_type_fluxes}c)}{\underset{k\leftrightarrow l}{=}}& \sum_{k,l}\omega_k\big({\bf A}({\bf U}_j^k)+{\bf A}({\bf U}_j^l)\big){\bf U}_j^lD_{kl}  - {\bf A}({\bf U}_j^p){\bf U}_j^p + {\bf A}({\bf U}_j^0){\bf U}_j^0 \\
 &\overset{(\ref{eq:SBP})}{=}& \sum_{k,l}\omega_k{\bf A}({\bf U}_j^k){\bf U}_j^lD_{kl}, \\
 &=& \langle{\bf A}({\bf u}_h),d_x{\bf u}_h\rangle_j^p,
\end{subeqnarray*}
\noindent which completes the proof.\quad$\square$
\end{proof}

Now, we consider sequential splittings of the nonconservative product for smooth solutions of the form

\begin{equation}\label{eq:splitting_Au_x}
 {\bf A}\partial_x{\bf u} = \alpha{\bf A}\partial_x{\bf u} + (1-\alpha)\big(\partial_x({\bf A}{\bf u})-(\partial_x{\bf A}){\bf u}\big), \quad 0\leq\alpha\leq1.
\end{equation}

Entropy stable schemes based on the above decomposition fall into the assumptions of Theorem~\ref{th:roe_type_fluxes} as stated below.

\begin{corollary}[skew-symmetric splitting]\label{th:skew-sym-splitting}
Enropy conservative fluxes in (\ref{eq:entropy_conserv_flux}) for the splitting (\ref{eq:splitting_Au_x}) read

\begin{equation}\label{eq:skew-sym-splitting}
 {\bf D}_{ec}^\pm({\bf u}^-,{\bf u}^+)={\bf\cal A}^\pm({\bf u}^-,{\bf u}^+)\du {\bf u} \df, \quad {\bf\cal A}^\pm({\bf u}^-,{\bf u}^+) = \frac{1}{2}\big(\alpha{\bf A}({\bf u}^\pm)+(1-\alpha){\bf A}({\bf u}^\mp)\big),
\end{equation}
\noindent and constitute particular cases of the high-order entropy conservative fluxes (\ref{eq:roe_type_fluxes}) of Theorem~\ref{th:roe_type_fluxes}.
\end{corollary}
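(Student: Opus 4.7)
The plan is to verify that the skew-symmetric fluxes in (\ref{eq:skew-sym-splitting}) fit into the family (\ref{eq:roe_type_fluxes}) by directly identifying the matrices $\boldsymbol{\mathcal{A}}^\pm$, so that Theorem~\ref{th:roe_type_fluxes} applies verbatim. Consistency (\ref{eq:consistent_flux}) is free of charge because both fluxes carry the jump $\du\mathbf{u}\df$, which vanishes when $\mathbf{u}^-=\mathbf{u}^+$.

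The structural verification is essentially algebraic. The given $\boldsymbol{\mathcal{A}}^\pm(\mathbf{u}^-,\mathbf{u}^+) = \tfrac{1}{2}\bigl(\alpha\mathbf{A}(\mathbf{u}^\pm)+(1-\alpha)\mathbf{A}(\mathbf{u}^\mp)\bigr)$ already matches (\ref{eq:roe_type_fluxes}a). Summing the two signs gives
$$\boldsymbol{\mathcal{A}}(\mathbf{u}^-,\mathbf{u}^+) := \boldsymbol{\mathcal{A}}^-(\mathbf{u}^-,\mathbf{u}^+)+\boldsymbol{\mathcal{A}}^+(\mathbf{u}^-,\mathbf{u}^+) = \tfrac{1}{2}\bigl(\mathbf{A}(\mathbf{u}^-)+\mathbf{A}(\mathbf{u}^+)\bigr),$$
a quantity independent of $\alpha$, so (\ref{eq:roe_type_fluxes}b) is just the definition. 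The symmetric arithmetic mean is invariant under the swap $\mathbf{u}^-\leftrightarrow\mathbf{u}^+$, hence $\boldsymbol{\mathcal{A}}(\mathbf{u}^-,\mathbf{u}^+)+\boldsymbol{\mathcal{A}}(\mathbf{u}^+,\mathbf{u}^-) = \mathbf{A}(\mathbf{u}^-)+\mathbf{A}(\mathbf{u}^+)$, which is (\ref{eq:roe_type_fluxes}c). Evaluating at $\mathbf{u}^-=\mathbf{u}^+=\mathbf{u}$ collapses the mean to $\mathbf{A}(\mathbf{u})$, giving (\ref{eq:roe_type_fluxes}d). None of these steps requires an argument beyond a one-line computation.

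The only genuinely system-dependent ingredient is entropy conservation (\ref{eq:entropy_conserv_flux}). Inserting (\ref{eq:skew-sym-splitting}) into the left-hand side of (\ref{eq:entropy_conserv_flux}) and invoking the compatibility relation $\etab'(\mathbf{u})^\top\mathbf{A}(\mathbf{u})=\mathbf{q}'(\mathbf{u})^\top$ converts the ``diagonal'' contributions into $\alpha\mathbf{q}'(\mathbf{u}^\pm)^\top\du\mathbf{u}\df$; reaching $q(\mathbf{u}^+)-q(\mathbf{u}^-)$ on the right then forces the cross terms $(1-\alpha)\etab'(\mathbf{u}^\pm)^\top\mathbf{A}(\mathbf{u}^\mp)\du\mathbf{u}\df$ to exactly complete the jump of $q$. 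I expect this final collapse to be the main obstacle: it is \emph{not} automatic for a generic pair $(\eta,q)$ and matrix $\mathbf{A}$, but must be checked system by system---which is precisely the role played by the examples in Section~\ref{sec:examples}. Accordingly, I would read the corollary as the statement that \emph{whenever} the splitting (\ref{eq:splitting_Au_x}) yields fluxes satisfying (\ref{eq:entropy_conserv_flux}) for a given model, those fluxes automatically inherit the structural form (\ref{eq:roe_type_fluxes}), so that the high-order accuracy and the cell-average identity (\ref{eq:mean_DGSEM}) provided by Theorem~\ref{th:roe_type_fluxes} follow without additional work.
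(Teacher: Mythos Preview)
Your verification that (\ref{eq:skew-sym-splitting}) satisfies the structural conditions (\ref{eq:roe_type_fluxes}a--d) is correct and matches the second half of the paper's own proof almost line for line: compute $\boldsymbol{\mathcal{A}}=\boldsymbol{\mathcal{A}}^-+\boldsymbol{\mathcal{A}}^+=\tfrac{1}{2}(\mathbf{A}(\mathbf{u}^-)+\mathbf{A}(\mathbf{u}^+))$, observe symmetry for (\ref{eq:roe_type_fluxes}c), and collapse the diagonal for (\ref{eq:roe_type_fluxes}d).

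Where you diverge is in what you leave out. The paper's proof has a first step you omit: it expands $\tilde{\mathbf{D}}(\mathbf{u}^-,\mathbf{u}^+)$ from (\ref{eq:ECPC_intvol_func}) using (\ref{eq:skew-sym-splitting}), drops the $\mathbf{A}(\mathbf{u}^-)\mathbf{u}^-$ term via (\ref{eq:interp_lag_unite_deriv}), and shows that what remains is precisely a nodal discretization of the right-hand side of the splitting (\ref{eq:splitting_Au_x}). This is the content behind the phrase ``fluxes \emph{for the splitting} (\ref{eq:splitting_Au_x})'': the corollary is not merely asserting that (\ref{eq:skew-sym-splitting}) lies in the class (\ref{eq:roe_type_fluxes}), but also that plugging it into the modified volume integral reproduces the skew-symmetric form $\alpha\mathbf{A}\partial_x\mathbf{u}+(1-\alpha)(\partial_x(\mathbf{A}\mathbf{u})-(\partial_x\mathbf{A})\mathbf{u})$ at the discrete level. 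Your argument establishes the second claim of the corollary but not the first.

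Your remark on entropy conservation is perceptive: the paper's proof, like yours, does \emph{not} verify (\ref{eq:entropy_conserv_flux}) for (\ref{eq:skew-sym-splitting}) in general, and indeed such an identity cannot hold for arbitrary $(\eta,q,\mathbf{A})$. Your reading---that entropy conservation is an input to be checked model by model, after which the corollary delivers the structural form for free---is the correct interpretation of how the result is used in Section~\ref{sec:examples}.
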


\begin{proof}
First, using (\ref{eq:skew-sym-splitting}) and (\ref{eq:ECPC_intvol_func}) to evaluate the volume integral in (\ref{eq:semi-discr_ECPC_DGSEM}), we obtain

\begin{subeqnarray*}
 \tilde{\bf D}({\bf u}^-,{\bf u}^+) &=& \big(\alpha{\bf A}({\bf u}^-)+(1-\alpha){\bf A}({\bf u}^+)\big)\du {\bf u} \df \\
 &=& \alpha{\bf A}({\bf u}^-)({\bf u}^+-{\bf u}^-) + (1-\alpha){\bf A}({\bf u}^+)({\bf u}^+-{\bf u}^-) \\
 &=& \alpha{\bf A}({\bf u}^-){\bf u}^+ + (1-\alpha)\big({\bf A}({\bf u}^+){\bf u}^+-{\bf A}({\bf u}^+){\bf u}^-\big),
\end{subeqnarray*}
\noindent since from (\ref{eq:interp_lag_unite_deriv}) the term ${\bf A}({\bf u}^-){\bf u}^-$ has no contribution to the volume integral. The above relation implies that (\ref{eq:ECPC_intvol_func}) is a volume discretization of the RHS of (\ref{eq:splitting_Au_x}).

Then, from (\ref{eq:roe_type_fluxes}b) we have

\begin{equation*}
 {\bf\cal A}({\bf u}^-,{\bf u}^+) = {\bf\cal A}^-({\bf u}^-,{\bf u}^+) + {\bf\cal A}^+({\bf u}^-,{\bf u}^+) = \tfrac{1}{2}\big({\bf A}({\bf u}^-)+{\bf A}({\bf u}^+)\big),
\end{equation*}
\noindent which indeed satisfies (\ref{eq:roe_type_fluxes}c,d).\quad$\square$
\end{proof}

\subsection{Entropy conservative fluxes for conservation laws}\label{sec:cons_flux}

In the particular case where (\ref{eq:NL_model_pb}) reduces to a conservation law, \ie, ${\bf A}({\bf u})={\bf f}'({\bf u})$, it has been shown in \cite{chen_shu_17} that it is possible to satisfy the entropy inequality (\ref{eq:entropy_inequ_DGSEM}) by using the entropy conservative fluxes ${\bf h}_{ec}({\bf u}^-, {\bf u}^+)$ from Tadmor \cite{tadmor87} which satisfy

\begin{subeqnarray*}
 \du \etab'\df^\top {\bf h}_{ec}({\bf u}^-,{\bf u}^+) &=& \du\etab'^\top{\bf f}-q \df, \quad \forall {\bf u}^\pm\in\Omega^a, \\
 {\bf h}_{ec}({\bf u},{\bf u}) &=& {\bf f}({\bf u}), \quad \forall {\bf u}\in\Omega^a.
\end{subeqnarray*}

The link between fluctuation fluxes and conservative fluxes reads

\begin{subeqnarray*}
 {\bf h}_{ec}({\bf u}^-,{\bf u}^+) =  {\bf f}({\bf u}^-) + {\bf D}_{ec}^-({\bf u}^-,{\bf u}^+) = {\bf f}({\bf u}^+) - {\bf D}_{ec}^+({\bf u}^-,{\bf u}^+),
\end{subeqnarray*}
\noindent from which we deduce that

\begin{equation}\label{eq:ECPC_link_noncons_cons}
 {\bf D}_{ec}^-({\bf u}^-,{\bf u}^+)-{\bf D}_{ec}^+({\bf u}^+,{\bf u}^-) = {\bf h}_{ec}({\bf u}^-,{\bf u}^+) + {\bf h}_{ec}({\bf u}^+,{\bf u}^-) - 2{\bf f}({\bf u}^-),
\end{equation}
\noindent and using (\ref{eq:interp_lag_unite_deriv}) the volume integral in (\ref{eq:semi-discr_ECPC_DGSEM}) becomes

\begin{equation}\label{eq:ECPC_intvol_func_cons}
 \omega_k\sum_{l=0}^p\tilde{\bf D}({\bf U}_j^k,{\bf U}_j^l)D_{kl} = \omega_k\sum_{l=0}^p\big({\bf h}_{ec}({\bf U}_j^k,{\bf U}_j^l) + {\bf h}_{ec}({\bf U}_j^l,{\bf U}_j^k)\big)D_{kl}.
\end{equation}

In \cite{chen_shu_17}, a slightly different choice has been made: $\tilde{\bf D}({\bf u}^-,{\bf u}^+) := 2{\bf h}_{ec}({\bf u}^-,{\bf u}^+)$, where ${\bf h}_{ec}(\cdot,\cdot)$ is assumed to be symmetric. In fact, it may be easily verified that the properties of Theorem 3.3 in \cite{chen_shu_17} also hold with (\ref{eq:ECPC_intvol_func_cons}) which may be seen as a generalization of the framework of entropy stable DGSEM to nonsymmetric entropy conservative fluxes by using the symmetrizer $({\bf h}_{ec}({\bf u}^-,{\bf u}^+) + {\bf h}_{ec}({\bf u}^+,{\bf u}^-))/2$.

%
%
\section{Examples}\label{sec:examples}

In this section we consider different nonconservative scalar equations and systems in one space dimension and provide each time examples of entropy conservative numerical fluxes that fall into the category considered in Theorems~\ref{th:entropy_inequ_DGSEM} and \ref{th:roe_type_fluxes} in section~\ref{sec:entropy_stable_flux}. We give a more detailed description of examples \ref{sec:noncons_2_pb} and \ref{sec:BN_isos} that will be used in the numerical experiments of section \ref{sec:num_xp}. In the following, it is convenient to introduce the average operator $\ol{u}:=\tfrac{u^-+u^+}{2}$.

\subsection{Burgers equation}

The Burgers equation in nonconservative form reads

\begin{equation*}
 \partial_tu+u\partial_xu=0,
\end{equation*}
\noindent with entropy $\eta(u)=\tfrac{u^2}{2}$ and entropy flux $q(u)=\tfrac{u^3}{3}$. Entropy conservative fluctuation fluxes of the form (\ref{eq:roe_type_fluxes}) read

\begin{equation*}
 D_{ec}^-(u^-,u^+) = \frac{2u^-+u^+}{6}\du u\df, \quad D_{ec}^+(u^-,u^+) = \frac{u^-+2u^+}{6}\du u\df.
\end{equation*}

Using (\ref{eq:ECPC_link_noncons_cons}), with $f(u)=\tfrac{u^2}{2}$, and looking for an equivalent symmetric entropy conservative flux for conservative equations, we obtain

\begin{equation*}
 h_{ec}(u^-,u^+) = \frac{D_{ec}^-(u^-,u^+) - D_{ec}^+(u^-,u^+) + 2f(u^-)}{2} = \frac{(u^-)^2+u^⁻u^++(u^+)^2}{6},
\end{equation*}
\noindent which corresponds to the entropy conservative skew-symmetric splitting of the Burgers equation \cite{tadmor84}.

\subsection{coupled Burgers equation}
The following nonconservative system was first proposed in \cite{berthon_02}:

\begin{subeqnarray*}
 \partial_tu+u\partial_x(u+v) &=& 0,\\
 \partial_tv+v\partial_x(u+v) &=& 0,
\end{subeqnarray*}
\noindent where we recover the Burgers equation for the sum $u+v$. Entropy and entropy flux are therefore $\eta({\bf u})=\tfrac{(u+v)^2}{2}$ and $q({\bf u})=\tfrac{(u+v)^3}{3}$. Entropy conservative fluctuation fluxes of the form (\ref{eq:roe_type_fluxes}) may also be derived:

\begin{equation*}
 {\bf D}_{ec}^-({\bf u}^-,{\bf u}^+) = \frac{\du u+v\df}{6}\begin{pmatrix}2u^-+u^+\\ 2v^-+v^+\end{pmatrix},
 \quad {\bf D}_{ec}^+({\bf u}^-,{\bf u}^+) = \frac{\du u+v\df}{6}\begin{pmatrix}u^-+2u^+\\ v^-+2v^+\end{pmatrix},
\end{equation*}
\noindent which correspond to the path-conservative and entropy conservative fluxes derived in \cite{castro_etal_13}.

\subsection{Nonconservative product associated to a LD field}\label{sec:noncons_2_pb}
Let us introduce the following nonlinear hyperbolic system representative of two-phase flow problems where the LD characteristic field plays the role of interface velocity \cite{coquel_etal_02}:

\begin{subeqnarray}\label{eq:Sys22_LD_noncons}
 \partial_tu+g({\bf u})\partial_xu &=& 0,\\
 \partial_tv+\partial_xf({\bf u}) &=& 0,
\end{subeqnarray}
\noindent with $g({\bf u})=u+v$ and $f({\bf u})=\tfrac{v^2-u^2}{2}$. The eigenvalues are $g({\bf u})$ associated to the LD field and $v$ associated to a genuinely nonlinear field so the system is strictly hyperbolic over the set of states $\Omega^a=\{(u,v)^\top\in\mathbb{R}^2:u>0\}$. It satisfies an entropy inequality for the pair $\eta({\bf u})=\tfrac{(u+v)^2}{2}$ and $q({\bf u})=\tfrac{(u+v)^3}{3}$.

Entropy conservative fluctuation fluxes are

\begin{subeqnarray}\label{eq:2_2_sys_EC_flux}
 {\bf D}_{ec}^-({\bf u}^-,{\bf u}^+) &=& \frac{1}{6}\begin{pmatrix}\big(2g({\bf u}^-)+g({\bf u}^+)\big)\du u\df\\ (2v^-+v^+)\du v\df-(2u^-+u^+)\du u\df\end{pmatrix},\\
 {\bf D}_{ec}^-({\bf u}^-,{\bf u}^+) &=& \frac{1}{6}\begin{pmatrix}\big(g({\bf u}^-)+2g({\bf u}^+)\big)\du u\df\\ (v^-+2v^+)\du v\df-(u^-+2u^+)\du u\df\end{pmatrix}.
\end{subeqnarray}

Note that the regularized system

\begin{equation*}
 \partial_tu+g({\bf u})\partial_xu = \epsilon\partial^2_{xx}u,\quad 
 \partial_tv+\partial_xf({\bf u}) =  \epsilon\partial^2_{xx}v,
\end{equation*}
\noindent with $\epsilon>0$ gives

\begin{equation*}
 \partial_t\eta({\bf u}) + \partial_xq({\bf u}) - \epsilon\partial^2_{xx}\eta({\bf u}) = - \epsilon\big((\partial_xu)^2+(\partial_xv)^2\big) \leq 0,
\end{equation*}
\noindent so the associated viscous profiles will give the physically admissible solutions in the limit $\epsilon=0^+$. Using this result for numerical purposes, we design the following entropy stable flux

\begin{equation}\label{eq:2_2_sys_ES_flux}
{\bf D}^\pm({\bf u}^-,{\bf u}^+) = \begin{pmatrix}\tfrac{2g({\bf u}^\pm)+g({\bf u}^\mp)}{6}\du u\df\\ \pm\big(f({\bf u}^\pm)-\hat{h}({\bf u}^-,{\bf u}^+)\big) \end{pmatrix} \pm \epsilon_v\du{\bf u}\df, \quad \hat{h}({\bf u}^-,{\bf u}^+) = \tfrac{f({\bf u}^-)+f({\bf u}^+)}{2} - \tfrac{\beta_s}{2}\du v\df,
\end{equation}

\noindent with numerical parameters $\epsilon_v\geq0$ and $\beta_s\geq0$. Setting $\epsilon_v=0$, it may be checked that the fluctuations fluxes in (\ref{eq:2_2_sys_ES_flux}) are entropy conservative providing that $\beta_s=(\du v\df-\du u\df^2/\du v\df)/6$. In practice, we set $\beta_s=\max\big(|v^\pm|,|g({\bf u}^\pm)|,(\du v\df-\du u\df^2/\du v\df)/6,0\big)$ and $\epsilon_v>0$ to get an entropy stable flux.
\subsection{Euler equations in Lagrangian coordinates}
The Euler equations in Lagrangian coordinates may be written in nonconservative form:

\begin{subeqnarray*}
 \partial_t\tau-\partial_xu &=& 0,\\
 \partial_tu+\partial_x\mathrm{p} &=& 0,\\
 \partial_te+\mathrm{p}\partial_xu &=& 0,
\end{subeqnarray*}
\noindent with $\tau$ the specific volume, $u$ the velocity, $e$ the specific internal energy. The equations are supplemented with a general equation of states for the pressure $\mathrm{p}=\mathrm{p}(\tau,e)$ and admissible solutions satisfy the entropy inequality

\begin{equation*}
 \partial_t\mathrm{s} \geq 0,
\end{equation*}
\noindent with $\mathrm{T}d\mathrm{s} = de + \mathrm{p}d\tau$, and $\mathrm{T}$ the temperature.

Entropy conservative fluctuation fluxes are

\begin{equation*}
 {\bf D}_{ec}^-({\bf u}^-,{\bf u}^+) = \frac{1}{2}\begin{pmatrix}-\du u\df\\ \du \mathrm{p}\df\\ \mathrm{p}^-\du u\df\end{pmatrix},\quad {\bf D}_{ec}^+({\bf u}^-,{\bf u}^+) = \frac{1}{2}\begin{pmatrix}-\du u\df\\ \du \mathrm{p}\df\\ \mathrm{p}^+\du u\df\end{pmatrix}.
\end{equation*}

Note that these fluxes are different from the path-conservative Roe-type method with straight-line paths in $\tau$, $u$ and $\mathrm{p}$ \cite{abgrall_karni_10,chalons_coquel_17,toumi_92} where the fluctuation fluxes read

\begin{equation*}
 {\bf D}_{roe}^\pm({\bf u}^-,{\bf u}^+) = {\bf\cal A}({\bf u}^-,{\bf u}^+)^\pm\du{\bf u}\df = {\bf A}(\tilde{\bf v})\du{\bf u}\df,
\end{equation*}
\noindent with $\tilde{\bf v}=(\ol\tau,\ol u,\ol{\mathrm{p}})^\top$. 

%
\subsection{One-pressure model of spray dynamics}
We now consider the one-pressure two-velocity four equations system for modeling the dynamics of a spray of liquid droplets in a gas at thermodynamic equilibrium \cite{sainsaulieu_91,stewart_wendroff_84}. Let $\rho_g$ be the gas density, $\rho_l>0$ the constant and uniform liquid density, $\alpha$ the void fraction of the gas, and $u_g$ and $u_l$ the velocities of the gas and liquid phases. The variables obey the following hyperbolic system

\begin{subeqnarray*}
 \partial_t(\alpha\rho_g) + \partial_x(\alpha\rho_gu_g) &=& 0,\\
 \partial_t(\alpha\rho_gu_g) + \partial_x(\alpha_g\rho_gu_g^2)+\alpha\partial_x\mathrm{p} &=& 0,\\
 \partial_t\big((1-\alpha)\rho_l\big) + \partial_x\big((1-\alpha)\rho_lu_l\big) &=& 0,\\
 \partial_t\big((1-\alpha)\rho_lu_l\big) + \partial_x\big((1-\alpha)\rho_lu_l^2\big) + (1-\alpha)\partial_x\mathrm{p} + \partial_x\theta &=& 0,\\
\end{subeqnarray*}
\noindent over the set of states $\Omega^a=\{{\bf u}\in\mathbb{R}^4:\;\rho_g>0,0<\alpha<1\}$. The gas pressure $\mathrm{p}=\mathrm{p}(\rho_g)$ satisfies $\mathrm{p}'(\rho_g)>0$, and $\theta(\alpha)=\theta_0(1-\alpha)^\delta$, with $1<\delta<2$, where $\theta_0$ denotes the total pressure of the gas on a droplet. The system satisfies an entropy inequality (\ref{eq:entropy_ineq_cont}) for the pair

\begin{subeqnarray*}
 \eta({\bf u}) &=& \alpha\rho_g\Big(\frac{u_g^2}{2}+e(\rho_g)\Big) + (1-\alpha)\rho_l\frac{u_l^2}{2} + \frac{\theta(\alpha)}{\delta-1}, \\
 q({\bf u}) &=& \alpha\rho_g\Big(\frac{u_g^2}{2}+\mathrm{h}(\rho_g)\Big)u_g + (1-\alpha)\Big(\rho_l\frac{u_l^2}{2}+\mathrm{p}(\rho_g)\Big)u_l + \frac{\delta}{\delta-1}\theta(\alpha)u_l,
\end{subeqnarray*}
\noindent where $\rho_g^2e'(\rho_g)=\mathrm{p}(\rho_g)$ and $\mathrm{h}(\rho_g)=e(\rho_g)+\mathrm{p}(\rho_g)/\rho_g$. It can be checked that the following fluxes are entropy conservative:

\begin{subeqnarray*}
 {\bf D}_{ec}^-({\bf u}^-,{\bf u}^+) = \begin{pmatrix}
 \hat{h}_g-\alpha^-\rho_g^-u_g^- \\ \hat{h}_g\ol{u_g}+\ol{\alpha}\;\ol{\mathrm{p}} - \alpha^-(\rho_g^-(u_g^-)^2+\mathrm{p}^-) - \tfrac{\mathrm{p}^-}{2}\du\alpha\df \\ \hat{h}_l-(1-\alpha^-)\rho_lu_l^- \\ \hat{h}_l\ol{u_l}+\ol{1-\alpha}\ol{\mathrm{p}} - (1-\alpha^-)(\rho_l(u_l^-)^2+\mathrm{p}^-) + \tfrac{\mathrm{p}^-}{2}\du\alpha\df + \tfrac{\du\theta\df}{2}
\end{pmatrix}, \\
 {\bf D}_{ec}^+({\bf u}^-,{\bf u}^+) = \begin{pmatrix}
 \alpha^+\rho_g^+u_g^+-\hat{h}_g\\ \alpha^+(\rho_g^+(u_g^+)^2+\mathrm{p}^+)-\hat{h}_g\ol{u_g}-\ol{\alpha}\;\ol{\mathrm{p}} - \tfrac{\mathrm{p}^+}{2}\du\alpha\df \\ (1-\alpha^+)\rho_lu_l^+-\hat{h}_l \\ (1-\alpha^+)(\rho_l(u_l^+)^2+\mathrm{p}^+)-\hat{h}_l\ol{u_l}-\ol{1-\alpha}\ol{\mathrm{p}} + \tfrac{\mathrm{p}^+}{2}\du\alpha\df + \tfrac{\du\theta\df}{2}
\end{pmatrix},
\end{subeqnarray*}
\noindent where

\begin{subeqnarray*}
 \hat{h}_g &=& \left\{\begin{array}{ll}
 \ol{\alpha}\;\ol{u_g}\frac{\du\mathrm{p}(\rho_g)\df}{\du\mathrm{h}(\rho_g)\df} & \mbox{if } \rho_g^- \neq \rho_g^+, \\ \ol{\alpha}\;\ol{u_g}\rho_g & \mbox{if } \rho_g^- = \rho_g^+ = \rho_g,
\end{array}\right. \\
 \hat{h}_l &=& \left\{\begin{array}{ll}
 \rho_l\frac{\ol{1-\alpha}\;\ol{u_l}\du\mathrm{p}(\rho_g)\df+\ol{u_l}\du\theta(\alpha)\df}{\du\mathrm{p}(\rho_g)\df+\tfrac{\delta}{\delta-1}\du\theta(\alpha)\df} & \mbox{if } \rho_g^- \neq \rho_g^+ \mbox{ or } \alpha^- \neq \alpha^+, \\ \rho_l(1-\alpha)\ol{u_l} & \mbox{if } \rho_g^- = \rho_g^+ = \rho_g \mbox{ and } \alpha^- = \alpha^+ = \alpha.
\end{array}\right. 
\end{subeqnarray*}

\subsection{Isentropic Baer-Nunziato model}\label{sec:BN_isos}
We finally consider the two-pressure two-velocity isentropic model \cite{baer_nunziato86,ambroso_etal_CMS_08} with void fractions $\alpha_i$, densities $\rho_i$, velocities $u_i$, and general equations of states $\mathrm{p}_i=\mathrm{p}_i(\rho_i)$ with $\mathrm{p}_i'(\rho_i)>0$ and $\mathrm{p}_i''(\rho_i)<0$ for phases $i=1,2$. It is useful to introduce the specific internal energy $e_i$ and enthalpy $\mathrm{h}_i$ of both phases defined by $\rho_i^2e_i'(\rho_i)=\mathrm{p}_i(\rho_i)$ and $\rho_i\mathrm{h}_i(\rho_i)=\rho_ie_i(\rho_i)+\mathrm{p}_i(\rho_i)$. Likewise, we introduce the speeds of sound $c_i^2(\rho_i)=\mathrm{p}_i'(\rho_i)$.

\subsubsection{Two-phase flow model}\label{sec:BN_model}
Neglecting source terms modeling relaxation mechanisms, the governing equations have the form (\ref{eq:PDE_cons_noncons}) with

\begin{equation}\label{eq:BN}
 {\bf u} = \begin{pmatrix}\alpha_1 \\ \alpha_1\rho_1 \\ \alpha_1\rho_1u_1 \\ \alpha_2\rho_2 \\ \alpha_2\rho_2u_2 \end{pmatrix}, \quad
{\bf f}({\bf u}) = \begin{pmatrix} 0 \\ \alpha_1\rho_1u_1 \\ \alpha_1(\rho_1u_1^2+\mathrm{p}_1) \\ \alpha_2\rho_2u_2 \\ \alpha_2(\rho_2u_2^2+\mathrm{p}_2)\end{pmatrix}, \quad
{\bf c}({\bf u})\partial_x{\bf u} = \begin{pmatrix} u_2 \\ 0 \\ -\mathrm{p}_1 \\ 0 \\ \mathrm{p}_1 \end{pmatrix}\partial_x\alpha_1,
\end{equation}
\noindent where $u_2$ and $\mathrm{p}_1$ have been chosen as closure laws for the interface velocity and pressure, respectively. Both phases are assumed to satisfy the saturation condition

\begin{equation}\label{eq:saturation_condition}
 \alpha_1+\alpha_2=1.
\end{equation}

The set of states is $\Omega^a=\{{\bf u}\in\mathbb{R}^5:\;\rho_i>0, \alpha_i>0, i=1,2\}$ and the  system satisfies an entropy inequality (\ref{eq:entropy_ineq_cont}) for the pair

\begin{equation}\label{eq:BN_entropy_pair}
 \eta({\bf u}) = \sum_{i=1}^2 \alpha_i\rho_i\Big(\frac{u_i^2}{2}+e_i(\rho_i)\Big), \quad q({\bf u}) = \sum_{i=1}^2 \alpha_i\rho_i\Big(\frac{u_i^2}{2}+\mathrm{h}_i(\rho_i)\Big)u_i.
\end{equation}

We stress that the Baer-Nunziato system is only weakly hyperbolic and the assumptions in the introduction exclude resonance effects \cite{berthon_etal_12}, though the numerical experiments in section~\ref{sec:num_xp} will consider solutions close to resonance.

Note that given a smooth function $\psi(\alpha_2)$, combining both equations for the void fraction and partial density $\alpha_2\rho_2$, we get the following relation in conservation form

\begin{equation}\label{eq:generalized_alpha_eq_BN}
 \partial_t\big(\alpha_2\psi(\alpha_2)\rho_2\big) + \partial_x\big(\alpha_2\psi(\alpha_2)\rho_2u_2\big) = 0.
\end{equation}

Following the lines of Tadmor's proof of a minimum entropy principle for the gas dynamics equations \cite{tadmor86}, a maximum principle holds for the void fractions. This is summarized in the following lemma.

\begin{lemma}[maximum principle]\label{th:max_principle_BN}
The following estimates hold for solutions of the isentropic Baer-Nunziato model (\ref{eq:PDE_cons_noncons})-(\ref{eq:BN}):

\begin{equation}\label{eq:minmax_principle_BN}
 \underset{|x|\leq X+tu_2^{max}}{\mbox{ess inf}} \alpha_i^0(x) \leq \alpha_i(x,t) \leq \underset{|x|\leq X+tu_2^{max}}{\mbox{ess sup}} \alpha_i^0(x), \quad \mbox{for almost all }|x|\leq X, \; t>0,
\end{equation}
\noindent for $i=1,2$, where $u_2^{max}=\max_{\cal C}|u_2|$ over ${\cal C}=\{(x,\tau):\;|x|\leq X+(t-\tau)u_2^{max},\;0\leq\tau\leq t\}$ and $\alpha_i^0(\cdot)=\alpha_i(\cdot,0)$. 
\end{lemma}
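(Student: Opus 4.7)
The plan is to adapt Tadmor's argument \cite{tadmor86} for the minimum entropy principle of the compressible Euler equations: we apply the generalized conservation law (\ref{eq:generalized_alpha_eq_BN}) with a carefully chosen nonnegative convex $\psi$ and then perform a backward-cone integration. Since the saturation constraint (\ref{eq:saturation_condition}) converts any two-sided bound on $\alpha_2$ into the corresponding bound on $\alpha_1=1-\alpha_2$, it is enough to treat the case $i=2$.

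The first step is to establish that, for every nonnegative convex $\psi\in C^1(\mathbb{R};\mathbb{R}_+)$, entropy admissible solutions of (\ref{eq:PDE_cons_noncons})--(\ref{eq:BN}) satisfy
\begin{equation*}
 \partial_t\bigl(\alpha_2\psi(\alpha_2)\rho_2\bigr)+\partial_x\bigl(\alpha_2\psi(\alpha_2)\rho_2u_2\bigr)\leq 0
\end{equation*}
in the distributional sense. For smooth solutions this is an equality, obtained exactly as in the derivation of (\ref{eq:generalized_alpha_eq_BN}), by combining the pure advection $\partial_t\alpha_2+u_2\partial_x\alpha_2=0$ (which comes from the first row of (\ref{eq:BN}) together with (\ref{eq:saturation_condition})), the partial density conservation $\partial_t(\alpha_2\rho_2)+\partial_x(\alpha_2\rho_2u_2)=0$, and the chain rule applied to $\tilde\psi(y)=y\psi(y)$. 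At shocks the inequality is obtained exactly as in \cite{tadmor86}: the linear degeneracy of the $\alpha_1$-field localizes all jumps of $\alpha_2$ at contact discontinuities of speed $u_2$, where the Rankine--Hugoniot conditions of the conservative subsystem of (\ref{eq:BN}), combined with the entropy inequality (\ref{eq:entropy_ineq_cont}) for (\ref{eq:BN_entropy_pair}) and the convexity of $\tilde\psi$, force the Rankine--Hugoniot residual of $\alpha_2\psi(\alpha_2)\rho_2(u_2-\sigma)$ to have the sign $\leq 0$.

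Given this weak sub-inequality, the second step is a standard cone argument. Integrating over $\mathcal{C}$ and applying the divergence theorem, the outflow through each lateral side $x=\pm(X+(t-\tau)u_2^{\max})$ is proportional to $\alpha_2\psi(\alpha_2)\rho_2\,(u_2^{\max}\pm u_2)\geq 0$, since by definition $|u_2|\leq u_2^{\max}$ on $\mathcal{C}$ and $\alpha_2\psi(\alpha_2)\rho_2\geq 0$ on $\Omega^a$. Dropping these nonnegative lateral contributions yields
\begin{equation*}
 \int_{|x|\leq X}\alpha_2(x,t)\psi\bigl(\alpha_2(x,t)\bigr)\rho_2(x,t)\,dx \leq \int_{|x|\leq X+tu_2^{\max}}\alpha_2^0(x)\psi\bigl(\alpha_2^0(x)\bigr)\rho_2^0(x)\,dx.
\end{equation*}
To obtain the upper bound in (\ref{eq:minmax_principle_BN}), choose $\psi(y)=\bigl((y-M)_+\bigr)^2$ where $M$ is the essential supremum of $\alpha_2^0$ on $|x|\leq X+tu_2^{\max}$: the right-hand side vanishes by construction and, since $\alpha_2\rho_2>0$, we conclude that $\psi(\alpha_2(\cdot,t))\equiv 0$ a.e. on $|x|\leq X$, \ie, $\alpha_2\leq M$. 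The lower bound follows identically with $\psi(y)=\bigl((m-y)_+\bigr)^2$, and (\ref{eq:saturation_condition}) then transfers both bounds to $\alpha_1$.

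The main technical obstacle is the distributional sub-inequality in the first step: because the system is nonconservative, (\ref{eq:generalized_alpha_eq_BN}) is not automatically preserved by weak solutions, and the entropy pair (\ref{eq:BN_entropy_pair}) does not by itself control an arbitrary nonlinear function of $\alpha_2$. The resolution, following Tadmor's original proof, is to exploit the specific structure of the BN model, namely the linear degeneracy of the $\alpha_1$-field, which confines discontinuities of $\alpha_2$ to contact waves of speed $u_2$; across such waves the nonconservative product (\ref{eq:generalized_RH}) is unambiguously defined and a Jensen-type inequality applied to the convex $\tilde\psi$ delivers the correct sign.
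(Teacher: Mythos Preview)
Your proof follows the same Tadmor-type cone argument as the paper: integrate the generalized conservation law (\ref{eq:generalized_alpha_eq_BN}) for $\alpha_2\psi(\alpha_2)\rho_2$ over the backward cone $\mathcal{C}$, use the sign of $n_t+u_2 n_x$ on the lateral boundary, choose $\psi$ as a nonnegative cutoff to extract the bounds, and transfer the result to $\alpha_1$ via (\ref{eq:saturation_condition}). The only substantive difference is in the treatment of non-smooth solutions: the paper simply invokes a formal vanishing-viscosity regularization, whereas you attempt a direct shock analysis.

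On that last point your justification is heavier than needed: you do not actually need the entropy inequality (\ref{eq:entropy_ineq_cont}), convexity of $\tilde\psi$, or any Jensen-type estimate. The linear degeneracy you correctly identify already does all the work. Across the $u_2$-contact one has $u_2^-=u_2^+=\sigma$, so $\alpha_2\psi(\alpha_2)\rho_2(u_2-\sigma)$ vanishes on both sides and the Rankine--Hugoniot defect of (\ref{eq:generalized_alpha_eq_BN}) is identically zero; across the remaining genuinely nonlinear waves $\alpha_2$ is continuous, so (\ref{eq:generalized_alpha_eq_BN}) is simply $\psi(\alpha_2)$ times the conserved partial-mass equation. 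Hence (\ref{eq:generalized_alpha_eq_BN}) holds with \emph{equality}, not merely $\leq 0$, across all elementary waves, and your cone estimate follows without appealing to the entropy pair (\ref{eq:BN_entropy_pair}) at all. Your choice $\psi=((\cdot-M)_+)^2$ versus the paper's $\psi=(\cdot-M)_+$ is immaterial.
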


\begin{proof}
Indeed, assuming first smooth solutions and integrating (\ref{eq:generalized_alpha_eq_BN}) over ${\cal C}$, we get

\begin{equation*}
 \int_{\partial{\cal C}} \alpha_2\psi(\alpha_2)\rho_2 (n_t+u_2n_x)ds = 0,
\end{equation*}
\noindent where $(n_t,n_x)$ denotes the unit normal pointing outward ${\cal C}$. Because $n_t+u_2n_x\geq0$ on $\partial{\cal C}$ for $0<\tau<t$ \cite[Lemma~3.1]{tadmor86}, we get for any smooth positive function $\psi(\alpha_2)$

\begin{equation*}
 \int_{|x|\leq X+tu_2^{max}} \alpha_2\psi(\alpha_2)\rho_2 dx \leq \int_{|x|\leq X} \alpha_2^0\psi(\alpha_2^0)\rho_2^0 dx.
\end{equation*}

Now using successively the positive functions $\psi(\alpha)=-\min(\alpha-\alpha^0,0)$ and $\psi(\alpha)=\max(\alpha-\alpha^0,0)$ and using vanishing viscosity arguments to obtain formal regularized versions of (\ref{eq:PDE_cons_noncons})-(\ref{eq:BN}) in the case of non-smooth solutions, we finally obtain (\ref{eq:minmax_principle_BN}) for $\alpha_2$. The same result also holds for $\alpha_1$ through the saturation condition (\ref{eq:saturation_condition}).\quad$\square$
\end{proof}

\subsubsection{Entropy conservative numerical fluxes}
The following fluxes are entropy conservative:

\begin{subeqnarray}\label{eq:BN_ECPC_fluxes}
 {\bf D}_{ec}^-({\bf u}^-,{\bf u}^+) &=& {\bf h}({\bf u}^-,{\bf u}^+) - {\bf f}({\bf u}^-) + {\bf d}^-({\bf u}^-,{\bf u}^+), \\
 {\bf D}_{ec}^+({\bf u}^-,{\bf u}^+) &=& {\bf f}({\bf u}^+) - {\bf h}({\bf u}^-,{\bf u}^+) + {\bf d}^+({\bf u}^-,{\bf u}^+),
\end{subeqnarray}
\noindent with 

\begin{equation}\label{eq:BN_ECPC_fluxes_detail}
{\bf h}({\bf u}^-,{\bf u}^+) = \begin{pmatrix} 0 \\ \ol{\alpha_1}\,\ol{u_1}\hat{h}_1(\rho_1^-,\rho_1^+) \\  \ol{\alpha_1}\big(\ol{u_1}^2\hat{h}_1(\rho_1^-,\rho_1^+) + \ol{\mathrm{p}_1}\big) \\ \ol{\alpha_2}\,\ol{u_2}\hat{h}_2(\rho_2^-,\rho_2^+) \\ \ol{\alpha_2}\big(\ol{u_2}^2\hat{h}_2(\rho_2^-,\rho_2^+) + \ol{\mathrm{p}_2}\big) \end{pmatrix}, \quad
{\bf d}^\pm({\bf u}^-,{\bf u}^+) = \frac{\du\alpha_1\df}{2} \begin{pmatrix} u_2^\pm\pm\beta_s \\ \pm\beta_s\hat{h}_1(\rho_1^-,\rho_1^+) \\ -\mathrm{p}_1^\pm\pm\beta_s\ol{u_1}\hat{h}_1(\rho_1^-,\rho_1^+) \\ \mp\beta_s\hat{h}_2(\rho_2^-,\rho_2^+) \\ \mathrm{p}_1^\pm\mp\beta_s\ol{u_2}\hat{h}_2(\rho_2^-,\rho_2^+) \end{pmatrix},
\end{equation}
\noindent where $\beta_s>0$ is a measure of the spectral radius of ${\bf A}({\bf u}_h)$ and will be evaluated in Lemma~\ref{th:pos_DG_BN}. The numerical fluxes for the partial densities in (\ref{eq:BN_ECPC_fluxes_detail}) read

\begin{equation}\label{eq:EC_density_flux}
 \hat{h}_i(\rho_i^-,\rho_i^+) = \left\{\begin{array}{ll}
 \frac{\du\mathrm{p}_i(\rho_i)\df}{\du\mathrm{h}_i(\rho_i)\df} & \mbox{if } \rho_i^- \neq \rho_i^+, \\ \rho_i & \mbox{if } \rho_i^- = \rho_i^+ = \rho_i,
\end{array}\right. \quad i=1,2.
\end{equation}

Indeed, inserting (\ref{eq:BN_entropy_pair}) into (\ref{eq:entropy_conserv_flux}) and using the Leibniz identities

\begin{equation}\label{eq:leibniz_id}
 \du\tfrac{u_i^2}{2}\df = \ol{u_i}\du u_i\df, \quad \du\alpha_i\mathrm{p}_iu_i\df = \ol{\alpha_i}(\ol{\mathrm{p}_i}\du u_i\df+\ol{u_i}\du \mathrm{p}_i\df)+\ol{\mathrm{p}_iu_i}\du\alpha_i\df, \quad i=1,2,
\end{equation}

\noindent we obtain

\begin{subeqnarray*}
(\etab'^-)^\top{\bf D}_{ec}^- &+& (\etab'^+)^\top{\bf D}_{ec}^+ - \du q\df \overset{(\ref{eq:BN_ECPC_fluxes})}{=} \du\etab'^\top({\bf f}-{\bf h})\df + 2\ol{\etab'^\top{\bf d}} - \du q\df \\
  &\overset{(\ref{eq:leibniz_id})}{=}& -\ol{\alpha_1}\,\ol{u_1}\hat{h}_1\du\mathrm{h}_1-\cancel{\tfrac{u_1^2}{2}}\df - \ol{\alpha_1}(\cancel{\ol{u_1}^2\hat{h}_1} + \ol{\mathrm{p}_1})\du u_1 \df + \du\cancel{\alpha_1\rho_1u_1(\mathrm{h}_1-\tfrac{u_1^2}{2})}\df + \du\alpha_1(\cancel{\rho_1u_1^2}+\mathrm{p}_1)u_1\df \\
 &-& \ol{\alpha_2}\,\ol{u_2}\hat{h}_2\du\mathrm{h}_2-\cancel{\tfrac{u_2^2}{2}}\df - \ol{\alpha_2}(\cancel{\ol{u_2}^2\hat{h}_2} + \ol{\mathrm{p}_2})\du u_2 \df + \du\cancel{\alpha_2\rho_2u_2(\mathrm{h}_2-\tfrac{u_2^2}{2})}\df + \du\alpha_2(\cancel{\rho_2u_2^2}+\mathrm{p}_2)u_2\df \\
 &-& \du \cancel{\alpha_1\rho_1(\tfrac{u_1^2}{2}+\mathrm{h}_1)u_1} + \cancel{\alpha_2\rho_2(\tfrac{u_2^2}{2}+\mathrm{h}_2)u_2}\df \\
 &+& \du\alpha_1\df\Big[ \ol{u_2(\mathrm{p}_2-\mathrm{p}_1)} - \ol{u_1\mathrm{p}_1} + \ol{u_2\mathrm{p}_1} + \tfrac{\beta_s}{2}\Big(\du\mathrm{p}_2-\mathrm{p}_1\df + \hat{h}_1\big(\du\mathrm{h}_1-\tfrac{u_1^2}{2}\df + \ol{u_1}\du u_1\df) \\
 &-&  \hat{h}_2 \big(\du\mathrm{h}_2-\tfrac{u_2^2}{2}\df + \ol{u_2}\du u_2\df\big)\Big)\Big] \\
  &\overset{(\ref{eq:EC_density_flux})}{=}& -\ol{\alpha_1}(\ol{u_1}\du\mathrm{p}_1\df+\ol{\mathrm{p}_1}\du u_1\df) + \du\alpha_1u_1\mathrm{p}_1\df - \ol{u_1\mathrm{p}_1}\du\alpha_1\df \\
 &-&\ol{\alpha_2}(\ol{u_2}\du\mathrm{p}_2\df+\ol{\mathrm{p}_2}\du u_2\df) + \du\alpha_2u_2\mathrm{p}_2\df - \ol{u_2\mathrm{p}_2}\du\alpha_2\df \\
  &\overset{(\ref{eq:leibniz_id})}{=}& 0.
\end{subeqnarray*}

Some remarks are in order. The numerical conservation flux ${\bf h}(\cdot,\cdot)$ in (\ref{eq:BN_ECPC_fluxes}) is symmetric, consistent and differentiable, while the fluctuation fluxes have the form (\ref{eq:roe_type_fluxes}a) with ${\bf\cal A}({\bf u}^-,{\bf u}^+) = (\ol{u_2},0,-\ol{\mathrm{p}_1},0,\ol{\mathrm{p}_1})^\top$ and therefore satisfy (\ref{eq:roe_type_fluxes}c,d) and are path-conservative (\ref{eq:castro_path_cons_flux}) for a linear path in $u_2$ and $\mathrm{p}_1$. Due to the presence of the nonlinear fluxes $\hat{h}_i$, the ${\bf d}^\pm$ are examples of fluctuations fluxes in non-splitting form.  Finally, the DGSEM with the fluxes (\ref{eq:BN_ECPC_fluxes}) is by construction conservative for the mixture density and momentum.

%
%
\section{High-order DGSEM for the isentropic Baer-Nunziato model}\label{sec:BN}

\subsection{Entropy stable fluxes}

We now focus on the design of a positive and entropy stable DG scheme for the two-pressure two-velocity isentropic model (\ref{eq:PDE_cons_noncons}) with (\ref{eq:BN}). For that purpose, we introduce the fully discrete scheme for a one-step first-order explicit time discretization and analyze its properties. High-order time integration will be done by using strong-stability preserving explicit Runge-Kutta methods \cite{shu-osher88} that keep the properties of the first-order in time scheme.

Let $t^{(n)}=n\Delta t$, with $\Delta t>0$ the time step, set $\lambda=\tfrac{\Delta t}{h}$, and use the notations ${\bf u}_h^{(n)}(\cdot)={\bf u}_h(\cdot,t^{(n)})$ and ${\bf U}_j^{k,n}={\bf U}_j^k(t^{(n)})$. The DGSEM scheme for solving the isentropic Baer-Nunziato equations reads

\begin{equation}\label{eq:discr_ECPC_DGSEM_BN}
 \frac{\omega_kh}{2}\frac{{\bf U}_j^{k,n+1}-{\bf U}_j^{k,n}}{\Delta t} + {\bf R}_j^k({\bf u}_h^{(n)}) = 0,
\end{equation}
\noindent with ${\bf R}_j^k(\cdot)$ defined in (\ref{eq:discr_ECPC_DGSEM_BN_res}) and where the entropy conservative fluxes (\ref{eq:BN_ECPC_fluxes}) are used in the definition of (\ref{eq:ECPC_intvol_func}). We follow the strategy in \cite{castro_etal_13} to design entropy stable fluxes at interfaces:

\begin{equation}\label{eq:discr_ECPC_DGSEM_BN_ES_fluxes}
{\bf D}^\pm({\bf u}^-,{\bf u}^+) = {\bf D}_{ec}^-({\bf u}^-,{\bf u}^+) \pm \epsilon_v\beta_s{\bf D}_v({\bf u}^-,{\bf u}^+)\du\etab'({\bf u})\df,
\end{equation}
\noindent with $\epsilon_v>0$ and the positive diagonal matrix

\begin{equation}
 {\bf D}_v({\bf u}^-,{\bf u}^+) = \mbox{diag}\big(0,\tfrac{\ol{\alpha_1\rho_1}}{\ol{u_1}^2+\ol{c_1}^2},\ol{\alpha_1\rho_1},\tfrac{\ol{\alpha_1\rho_1}}{\ol{u_1}^2+\ol{c_1}^2},\ol{\alpha_2\rho_2}\big).
\end{equation}

\subsection{Properties of the discrete scheme}\label{sec:ana_1st_order_time_discr}

We have the following results that guaranty positivity of the solution and the maximum principle (\ref{eq:minmax_principle_BN}) for the fully discrete solution of the DGSEM.

\begin{theorem}\label{th:pos_DG_BN}
 Assume that $\rho_{i,j\in\mathbb{Z}}^{0\leq k\leq p,n}>0$ and $\alpha_{i,j\in\mathbb{Z}}^{0\leq k\leq p,n}>0$ for $i=1,2$, then under the CFL condition

\begin{equation}\label{eq:CFL_cond_iso_BN}
 \lambda\max_{j\in\mathbb{Z}}\max_{0\leq k\leq p}\frac{1}{\omega_k}\Big(\big\langle u_{2,h}^{(n)},d_x\phi_j^k\big\rangle_j^p+\delta_{k,p}\frac{\beta_s-u_{2,j}^{p,n}}{2}+\delta_{k,0}\frac{\beta_s+u_{2,j}^{0,n}}{2}\Big) < \frac{1}{2},
\end{equation}
\noindent we have for the cell averages at time $t^{(n+1)}$

\begin{equation}\label{eq:pos_rho_alpha}
 \langle\alpha_i\rho_{i}\rangle_{j}^{(n+1)}>0, \quad \langle\alpha_i\rangle_{j}^{(n+1)}>0, \quad i=1,2, \quad j\in\mathbb{Z},
\end{equation}
\noindent and 
\begin{eqnarray}\label{eq:convex_comb_alpha}
 \langle\alpha_1\rangle_j^{(n+1)} &=& \sum_{k=0}^p\bigg(\frac{\omega_k}{2}-\lambda\Big(\big\langle u_{2,h}^{(n)},d_x\phi_j^k\big\rangle_j^p+\delta_{k,p}\frac{\beta_s-u_{2,j}^{p,n}}{2}+\delta_{k,0}\frac{\beta_s+u_{2,j}^{0,n}}{2}\Big)\bigg)\alpha_{1,j}^{k,n} \nonumber \\ 
 && +\lambda\frac{\beta_s-u_{2,j}^{p,n}}{2}\alpha_{1,j+1}^{0,n} + \lambda\frac{\beta_s+u_{2,j}^{0,n}}{2}\alpha_{1,j-1}^{p,n}, \quad j\in\mathbb{Z},
\end{eqnarray}
\noindent is a convex combination of DOFs at time $t^{(n)}$ where

\begin{equation*}
 \beta_s = \max\Big(|u_{i,j-1}^{p,n}|+c_{i,j-1}^{p,n},|u_{i,j}^{0\leq k\leq p,n}|+c_{i,j}^{0\leq k\leq p,n},|u_{i,j+1}^{0,n}|+c_{i,j+1}^{0,n}:\;i=1,2\Big),
\end{equation*}
\noindent and $c_i=c_i(\rho_i)$ denotes the speed of sound of phase $i$.
\end{theorem}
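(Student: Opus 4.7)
The plan is to carry out a component-wise analysis of the cell-averaged scheme. The void-fraction equation decouples cleanly from the remaining (conservation-law) components because the first entries of ${\bf h}$, ${\bf f}$, and of the diagonal matrix ${\bf D}_v$ all vanish; so for the first component the entropy-stable interface flux in (\ref{eq:discr_ECPC_DGSEM_BN_ES_fluxes}) reduces to the entropy-conservative one. I first establish (\ref{eq:convex_comb_alpha}), then deduce the $\alpha_i$-positivity in (\ref{eq:pos_rho_alpha}) from its convex-combination structure, and finally adapt the same machinery to the partial densities.

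To derive (\ref{eq:convex_comb_alpha}), I restrict the scheme (\ref{eq:discr_ECPC_DGSEM_BN}) to its first component and compute $\tilde D^1({\bf u}^-,{\bf u}^+)={\bf D}_{ec}^{-,1}({\bf u}^-,{\bf u}^+)-{\bf D}_{ec}^{+,1}({\bf u}^+,{\bf u}^-)$. Using (\ref{eq:BN_ECPC_fluxes})--(\ref{eq:BN_ECPC_fluxes_detail}), the $\beta_s$ pieces in ${\bf d}^{\pm,1}$ are antisymmetric in their arguments and cancel in this combination, leaving $\tilde D^1({\bf u}^-,{\bf u}^+)=u_2^-\du\alpha_1\df$. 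Combined with $\sum_l D_{kl}=0$ from (\ref{eq:interp_lag_unite_deriv}), the volume contribution at node $k$ becomes $\omega_k u_{2,j}^k\sum_l \alpha_{1,j}^l D_{kl}$; summing over $k$ and swapping $k\leftrightarrow l$, one recognises the coefficient of $\alpha_{1,j}^{k,n}$ in the cell-averaged equation as $\langle u_{2,h}^{(n)},d_x\phi_j^k\rangle_j^p$. The entropy-stable interface fluxes then contribute $\tfrac12(\alpha_{1,j+1}^0-\alpha_{1,j}^p)(u_{2,j}^p-\beta_s)$ at $x_{j+\frac12}$ and $\tfrac12(\alpha_{1,j}^0-\alpha_{1,j-1}^p)(u_{2,j}^0+\beta_s)$ at $x_{j-\frac12}$. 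Collecting the weight of every DOF in the resulting update yields (\ref{eq:convex_comb_alpha}) directly.

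Next I verify that (\ref{eq:convex_comb_alpha}) is a convex combination. The weights sum to $1$: $\sum_k \omega_k/2=1$; the sum $\sum_k\langle u_{2,h}^{(n)},d_x\phi_j^k\rangle_j^p$ telescopes via $\sum_k\phi_j^k\equiv 1$ to $\langle u_{2,h}^{(n)},d_x(1)\rangle_j^p=0$; and the $\lambda(\beta_s\mp u_{2,j}^{p,0,n})/2$ corrections at $k=0,p$ cancel against the two neighbor weights. The neighbor weights $\lambda(\beta_s-u_{2,j}^{p,n})/2$ and $\lambda(\beta_s+u_{2,j}^{0,n})/2$ are non-negative because the definition of $\beta_s$ in the theorem enforces $\beta_s\geq |u_{2,j}^{p,0,n}|$, and the interior weights are non-negative exactly under the CFL (\ref{eq:CFL_cond_iso_BN}). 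The convex combination of the strictly positive nodal values $\alpha_{1,\cdot}^{\cdot,n}$ then gives $\langle\alpha_1\rangle_j^{(n+1)}>0$; using in addition $\alpha_{1,j}^{k,n}<1$ (which follows from $\alpha_{2,j}^{k,n}>0$ via the nodal saturation $\alpha_1+\alpha_2=1$), the same combination yields $\langle\alpha_1\rangle_j^{(n+1)}<1$, whence $\langle\alpha_2\rangle_j^{(n+1)}=1-\langle\alpha_1\rangle_j^{(n+1)}>0$.

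The positivity of $\langle\alpha_i\rho_i\rangle_j^{(n+1)}$ is obtained by applying the same convex-combination machinery to the second and fourth components of the scheme. There, however, both ${\bf h}^{2,4}$ and ${\bf f}^{2,4}$ are nontrivial and the entropy-stable dissipation $\epsilon_v\beta_s{\bf D}_v\du\etab'({\bf u})\df$ at interfaces does contribute. Expanding the two-point density flux $\hat h_i$ via (\ref{eq:EC_density_flux}) and repeating the SBP-based index manipulations will display $\langle\alpha_i\rho_i\rangle_j^{(n+1)}$ as an affine combination of nodal partial densities. The main technical hurdle lies here: to secure non-negativity of the neighbor-DOF coefficients, the ${\bf D}_v$-dissipation (with its diagonal factors $\ol{\alpha_i\rho_i}/(\ol{u_i}^2+\ol{c_i}^2)$ and $\ol{\alpha_i\rho_i}$) must dominate any sign-indefinite piece produced by the central $\hat h_i$-based flux, which requires precisely the lower bound $\beta_s\geq |u_i|+c_i$ prescribed in the theorem; the interior weights are then again controlled by (\ref{eq:CFL_cond_iso_BN}), yielding $\langle\alpha_i\rho_i\rangle_j^{(n+1)}>0$.
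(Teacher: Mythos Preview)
Your treatment of the void-fraction component is correct and is essentially what the paper does, only spelled out in more detail. The paper's proof is extremely terse here: it merely says that summing the first component of (\ref{eq:discr_ECPC_DGSEM_BN}) over $0\leq k\leq p$ gives (\ref{eq:convex_comb_alpha}) and that convexity under (\ref{eq:CFL_cond_iso_BN}) is ``direct to check''. Your computation of $\tilde D^1=u_2^-\du\alpha_1\df$, the identification of the interior coefficient with $\langle u_{2,h}^{(n)},d_x\phi_j^k\rangle_j^p$, and your check that the weights sum to one and are non-negative are exactly the missing details. Getting $\langle\alpha_2\rangle_j^{(n+1)}>0$ from saturation is also the natural step.

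For the partial densities, however, the paper takes a different and much shorter route than you do. It does not try to display $\langle\alpha_i\rho_i\rangle_j^{(n+1)}$ as an affine combination of the nodal values $(\alpha_i\rho_i)_{\cdot}^{\cdot,n}$ with individually non-negative coefficients; instead it simply invokes the Perthame--Shu / Zhang--Shu technique \cite{perthame_shu_96,zhang_shu_10b}: rewrite the cell-averaged update as a convex combination of three-point first-order updates at the boundary nodes $k=0,p$ (plus the untouched interior nodal values), and observe that each such first-order scheme is positivity-preserving under the stated bound on $\beta_s$. Your direct coefficient-by-coefficient approach is not wrong in principle, but it is more laborious---for a nonlinear flux like $\hat h_i$ the ${\cal O}(p)$ coefficients depend on the solution in a complicated way---and, as you yourself acknowledge (``the main technical hurdle lies here''), you do not actually complete it. The Zhang--Shu decomposition bypasses this by reducing everything to positivity of a single first-order flux, which is where the condition $\beta_s\geq|u_i|+c_i$ enters cleanly.
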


\begin{proof}
The positivity of the cell averaged partial densities rely on the techniques introduced in \cite{perthame_shu_96,zhang_shu_10b} to rewrite a conservative high-order scheme for the mean value as a convex combination of positive first-order schemes. Summing the first component in (\ref{eq:discr_ECPC_DGSEM_BN}) over $0\leq k\leq p$ gives (\ref{eq:convex_comb_alpha}) and it is direct to check that it is a convex combination under the condition (\ref{eq:CFL_cond_iso_BN}).\quad$\square$
\end{proof}

The following result is useful to prevent spurious oscillations in the numerical solution. Indeed, the present schemes satisfies the Abgrall's criterion \cite{abgrall_96} that states that uniform velocity and pressure must remain uniform at all time.

\begin{lemma}[Abgrall's criterion]\label{th:unif_u_p_DG_BN}
Assume that the velocity and pressure are uniform and equal at time $t^{(n)}$:

\begin{equation}\label{eq:unif_u_p_DG_BN}
 u_{i,j}^{k,n}=u, \quad \mathrm{p}_{i,j}^{k,n}=\mathrm{p}, \quad i=1,2, \quad \forall j\in\mathbb{Z}, \quad 0\leq k\leq p,
\end{equation}
\noindent then they remain uniform and equal at time $t^{(n+1)}$.
\end{lemma}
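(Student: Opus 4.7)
The plan is to exploit two features: the entropy-dissipative correction in (\ref{eq:discr_ECPC_DGSEM_BN_ES_fluxes}) vanishes identically under (\ref{eq:unif_u_p_DG_BN}), and the remaining entropy conservative flux collapses to a rank-one expression whose shared direction encodes the uniform kinematic and thermodynamic state. Since $\mathrm{p}$ is uniform and $\mathrm{p}_i'>0$, the phase densities $\rho_{i,j}^{k,n}=\rho_i$ are constant in each phase, and consequently so are $\mathrm{h}_i$ and $c_i$.

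First I would write down the entropy variables. Using the saturation constraint $\alpha_2=1-\alpha_1$ together with (\ref{eq:BN_entropy_pair}) one obtains
\begin{equation*}
\etab'({\bf u})=\big(\mathrm{p}_2-\mathrm{p}_1,\;\mathrm{h}_1-\tfrac{u_1^2}{2},\;u_1,\;\mathrm{h}_2-\tfrac{u_2^2}{2},\;u_2\big)^\top.
\end{equation*}
Under (\ref{eq:unif_u_p_DG_BN}) every component takes a single value across all DOFs, so each jump $\du\etab'\df$ appearing in (\ref{eq:discr_ECPC_DGSEM_BN_ES_fluxes}) is zero and the interface fluxes reduce to ${\bf D}^\pm={\bf D}_{ec}^\pm$.

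Next I would substitute the uniform state into (\ref{eq:BN_ECPC_fluxes})--(\ref{eq:BN_ECPC_fluxes_detail}): $\hat{h}_i=\rho_i$ by (\ref{eq:EC_density_flux}), $\ol{u_i}=u$, $\ol{\mathrm{p}_i}=\mathrm{p}$, $\ol{\alpha_i\rho_i}=\rho_i\ol{\alpha_i}$, and $\ol{\alpha_2}-\alpha_2^\pm=\pm\du\alpha_1\df/2$. The cancellation of the pressure terms in the momentum components then reduces the entropy conservative fluctuation flux to the rank-one form
\begin{equation*}
{\bf D}_{ec}^\pm({\bf u}^-,{\bf u}^+)=\tfrac{1}{2}(u\pm\beta_s)\du\alpha_1\df\,{\bf v},\qquad {\bf v}:=(1,\rho_1,\rho_1 u,-\rho_2,-\rho_2 u)^\top,
\end{equation*}
and by (\ref{eq:ECPC_intvol_func}) the volume-integral flux becomes $\tilde{\bf D}({\bf u}^-,{\bf u}^+)=u\du\alpha_1\df\,{\bf v}$. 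Consequently the residual (\ref{eq:discr_ECPC_DGSEM_BN_res}) is itself proportional to ${\bf v}$, i.e.\ ${\bf R}_j^k=R_{j,1}^k\,{\bf v}$, with $R_{j,1}^k$ depending only on the nodal values of $\alpha_1$.

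Finally I would read off the update (\ref{eq:discr_ECPC_DGSEM_BN}) component by component. Proportionality to ${\bf v}$ combined with uniformity of the DOFs at time $t^{(n)}$ yields $(\alpha_1\rho_1)_j^{k,n+1}=\rho_1(\alpha_1)_j^{k,n+1}$ and $(\alpha_1\rho_1u_1)_j^{k,n+1}=\rho_1 u(\alpha_1)_j^{k,n+1}$, hence $\rho_{1,j}^{k,n+1}=\rho_1$, $u_{1,j}^{k,n+1}=u$ and $\mathrm{p}_{1,j}^{k,n+1}=\mathrm{p}$. The saturation (\ref{eq:saturation_condition}) gives $(\alpha_2)_j^{k,n+1}=1-(\alpha_1)_j^{k,n+1}$; combined with the sign flip in the last two entries of ${\bf v}$, the same argument produces $\rho_{2,j}^{k,n+1}=\rho_2$, $u_{2,j}^{k,n+1}=u$ and $\mathrm{p}_{2,j}^{k,n+1}=\mathrm{p}$. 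The main obstacle I expect is the algebraic reduction of the second step: one must check carefully that the pressure terms $\ol{\mathrm{p}_i}$ in ${\bf h}$ and the $\mp\mathrm{p}_1^\pm$ in ${\bf d}^\pm$ combine with ${\bf f}({\bf u}^\pm)$ so that only the kinematic scalar $\rho_i u(u\pm\beta_s)$ survives in the momentum components; once this is established the remaining bookkeeping is routine.
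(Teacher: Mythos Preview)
Your proof is correct and follows essentially the same route as the paper: you observe that uniform $(u,\mathrm{p})$ forces $\du\etab'\df=0$ so the dissipative term drops, reduce ${\bf D}_{ec}^\pm$ to the rank-one form $\tfrac{1}{2}(u\pm\beta_s)\du\alpha_1\df\,{\bf v}$ with ${\bf v}=(1,\rho_1,\rho_1u,-\rho_2,-\rho_2u)^\top$, and then read off the componentwise update to recover uniformity at $t^{(n+1)}$. The only cosmetic differences are that you compute $\tilde{\bf D}$ explicitly and write the entropy variables out, whereas the paper instead displays the five scalar update equations and subtracts them pairwise; the algebraic content is identical.
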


\begin{proof}
The assumption (\ref{eq:unif_u_p_DG_BN}) on pressures require uniform densities $\rho_{i,j}^{k,n}=\rho_i$, $i=1,2$, so $\du\etab'({\bf u})\df=0$. Then, the entropy conservative fluxes (\ref{eq:BN_ECPC_fluxes}) and entropy stable fluxes (\ref{eq:discr_ECPC_DGSEM_BN_ES_fluxes}) reduce to

\begin{equation*}
 {\bf D}^\pm({\bf u}^-,{\bf u}^+) = {\bf D}_{ec}^\pm({\bf u}^-,{\bf u}^+) = \frac{\du\alpha_1\df}{2} \begin{pmatrix} u\pm\beta_s \\ \rho_1(u\pm\beta_s) \\ \rho_1u(u\pm\beta_s) \\ -\rho_2(u\pm\beta_s) \\ -\rho_2u(u\pm\beta_s) \end{pmatrix},
\end{equation*}

\noindent so the explicit residuals in (\ref{eq:discr_ECPC_DGSEM_BN}) become ${\bf R}_j^k({\bf u}_h)=R_{1,j}^{k}(1,\rho_1,\rho_1u,-\rho_2,-\rho_2u)^\top$ with

\begin{subeqnarray}\label{eq:discr_ECPC_DGSEM_res_u_p_unif}
R_{1,j}^{k} &=& \omega_ku\sum_{l=0}^p\alpha_{1,j}^{k}D_{kl}+\tfrac{\delta_{kp}(u-\beta_s)}{2}\du\alpha_1\df_{j+\frac{1}{2}}+\tfrac{\delta_{k0}(u+\beta_s)}{2}\du\alpha_1\df_{j-\frac{1}{2}},\\
 &\overset{(\ref{eq:saturation_condition})}{\underset{(\ref{eq:interp_lag_unite_deriv})}{=}}& -\omega_ku\sum_{l=0}^p\alpha_{2,j}^{k}D_{kl}-\tfrac{\delta_{kp}(u-\beta_s)}{2}\du\alpha_2\df_{j+\frac{1}{2}}-\tfrac{\delta_{k0}(u+\beta_s)}{2}\du\alpha_2\df_{j-\frac{1}{2}}.
\end{subeqnarray}
We thus rewrite (\ref{eq:discr_ECPC_DGSEM_BN}) as

\begin{subeqnarray}\label{eq:iscr_ECPC_DGSEM_BN_u_p_unif}
\tfrac{\omega_kh}{2}\tfrac{\alpha_{1,j}^{k,n+1}-\alpha_{1,j}^{k,n}}{\Delta t} + R_{1,j}^{k,n} &=& 0,\\
\tfrac{\omega_kh}{2}\tfrac{\alpha_{1,j}^{k,n+1}(\rho_{1,j}^{k,n+1}-\rho_1)+\rho_1(\alpha_{1,j}^{k,n+1}-\alpha_{1,j}^{k,n})}{\Delta t} + \rho_1R_{1,j}^{k,n} &=& 0,\\
\tfrac{\omega_kh}{2}\tfrac{(\alpha_{1}\rho_1)_{j}^{k,n+1}(u_{1,j}^{k,n+1}-u)+u\big((\alpha_{1}\rho_1)^{k,n+1}-\rho_1\alpha_{1,j}^{k,n}\big)}{\Delta t} + \rho_1uR_{1,j}^{k,n} &=& 0,\\
\tfrac{\omega_kh}{2}\tfrac{\alpha_{2,j}^{k,n+1}(\rho_{2,j}^{k,n+1}-\rho_2)+\rho_2(\alpha_{2,j}^{k,n+1}-\alpha_{2,j}^{k,n})}{\Delta t} - \rho_2R_{1,j}^{k,n} &=& 0,\\
\tfrac{\omega_kh}{2}\tfrac{(\alpha_{2}\rho_2)_{j}^{k,n+1}(u_{2,j}^{k,n+1}-u)+u\big((\alpha_{2}\rho_2)^{k,n+1}-\rho_2\alpha_{2,j}^{k,n}\big)}{\Delta t} - \rho_2uR_{1,j}^{k,n} &=& 0.
\end{subeqnarray}
Then, (\ref{eq:iscr_ECPC_DGSEM_BN_u_p_unif}b)$-\rho_1$(\ref{eq:iscr_ECPC_DGSEM_BN_u_p_unif}a) implies $\rho_{1,j}^{k,n+1}=\rho_1$, while (\ref{eq:iscr_ECPC_DGSEM_BN_u_p_unif}c)$-\rho_1u$(\ref{eq:iscr_ECPC_DGSEM_BN_u_p_unif}a) gives $u_{1,j}^{k,n+1}=u$. Then, using (\ref{eq:discr_ECPC_DGSEM_res_u_p_unif}b) and (\ref{eq:iscr_ECPC_DGSEM_BN_u_p_unif}d,e) we obtain $\rho_{2,j}^{k,n+1}=\rho_2$ and $u_{2,j}^{k,n+1}=u$.\quad$\square$
\end{proof}
\subsection{Limiting strategy}\label{sec:limiters}

The properties in Theorem~\ref{th:pos_DG_BN} hold only for the cell averaged value of the numerical solution at time $t^{(n+1)}$, which is not sufficient for robustness and stability of numerical computations. However, these results motivate the use of a posteriori limiters introduced in \cite{zhang_shu_10a,zhang_shu_10b}. These limiters aim at extending preservation of invariant domains \cite{zhang_shu_10b} or maximum principle  \cite{zhang_shu_10a} from mean values to nodal values within elements. 

We enforce positivity of nodal values of partial densities and the maximum principle (\ref{eq:minmax_principle_BN}) by using the linear limiter

\begin{equation}\label{eq:pos_limiter}
 \tilde{\bf U}_j^{k,n+1} = \theta_j({\bf U}_j^{k,n+1}- \langle{\bf u}\rangle_j^{(n+1)}) + \langle{\bf u}\rangle_j^{(n+1)}, \quad 0\leq k\leq p, \quad j \in \mathbb{Z},
\end{equation}
\noindent with  $0\leq \theta_j\leq1$ defined by $\theta_j:=\min(\theta_j^{\rho_i},\theta_j^{\alpha_i}:i=1,2)$ where

\begin{eqnarray*}
 \theta_j^{\rho_i} &=& \min\Big(\frac{\langle\rho_i\rangle_{j}^{(n+1)}-\epsilon}{\langle\rho_i\rangle_{j}^{(n+1)}-\rho_{i,j}^{min}},1\Big), \quad \rho_{i,j}^{min}=\min_{0\leq k\leq p} \rho_{i,j}^{k,n+1}, \\
 \theta_j^{\alpha_i} &=& \min\Big(\frac{\langle\alpha_i\rangle_{j}^{(n+1)}-m_{j}^{\alpha_i}}{\langle\alpha_i\rangle_{j}^{(n+1)}-\alpha_{i,j}^{min}},\frac{\langle\alpha_i\rangle_{j}^{(n+1)}-M_{j}^{\alpha_i}}{\langle\alpha_i\rangle_{j}^{(n+1)}-\alpha_{i,j}^{max}},1\Big),  \quad \alpha_{i,j}^{min/max}=\underset{0\leq k\leq p}{\min/\max}\; \alpha_{i,j}^{k,n+1},
\end{eqnarray*}
\noindent $0<\epsilon\ll1$ a parameter, and

\begin{equation*}
 m_{j}^{\alpha_i} = \min\big(\alpha_{i,j-1}^{p,n},\alpha_{i,j}^{0\leq k\leq p,n},\alpha_{i,j+1}^{0,n}\big), \quad M_{j}^{\alpha_i} = \max\big(\alpha_{i,j-1}^{p,n},\alpha_{i,j}^{0\leq k\leq p,n},\alpha_{i,j+1}^{0,n}\big).
\end{equation*}

The limiter (\ref{eq:pos_limiter}) guaranties a discrete maximum principle on the void fractions $m_{j}^{\alpha_i} \leq\tilde\alpha_{i,j}^{0\leq k\leq p,n+1} \leq M_{j}^{\alpha_i}$ and keeps the entropy inequality (\ref{eq:entropy_ineq_cont}) at the discrete level in the sense that \cite[Lemma~3.1]{chen_shu_17} for $\eta$ convex we have

\begin{equation*}
 \langle\tilde\eta\rangle_j^{(n+1)}:=\sum_{k=0}^p\frac{\omega_k}{2}\eta(\tilde{\bf U}_j^{k,n+1}) \leq \langle\eta\rangle_j^{(n+1)}.
\end{equation*}

Likewise, phase densities and velocities remain unchanged by the limiter (\ref{eq:pos_limiter}) so uniform velocity and pressure profiles are conserved.

%
%
\section{Numerical experiments}\label{sec:num_xp}

In the following, we consider Riemann problems for nonconservative systems associated to initial conditions 

\begin{equation*}
 {\bf u}_0(x) = \left\{ \begin{array}{rl}  {\bf u}_L, & x<0, \\ {\bf u}_R, & x>0. \end{array} \right.
\end{equation*}

The set of initial conditions is given in Table~\ref{tab:RP_IC}. Figures~\ref{fig:solution_PR_noncons_2} to \ref{fig:solution_PR_BN} compare the numerical solution in symbols with the exact solution in lines. Problems RP1 and RP2 come from \cite{coquel_etal_13}, while RP3 is adapted from \cite{munkejord_07}.

\begin{table}
     \begin{center}
     \caption{Initial conditions and physical parameters of Riemann problems with ${\bf\cal U}=(u,v)^\top$ for the $2\times2$ system (\ref{eq:Sys22_LD_noncons}) and ${\bf\cal U}=(\alpha_1,\rho_1,u_1,\rho_2,u_2)^\top$ for the isentropic Baer-Nunziato system (\ref{eq:PDE_cons_noncons})-(\ref{eq:BN}).}
     \begin{tabular}{llccc}
        \noalign{\smallskip}\hline\noalign{\smallskip}
        test & model & left state ${\bf\cal U}_L$ & right state ${\bf\cal U}_R$ & $t$ \\
        \noalign{\smallskip}\hline\noalign{\smallskip}
        RP0 & (\ref{eq:Sys22_LD_noncons}) & $\big(3,\tfrac{1}{2}\big)^\top$ & $\big(\tfrac{3}{4},1\big)^\top$ & $0.15$ \\
        \noalign{\smallskip}\hline\noalign{\smallskip}
        RP1 & (\ref{eq:BN}) & $\left(\begin{array}{l} 0.1\\0.85\\0.4609513139\\0.96\\0.0839315299 \end{array}\right)$ & $\left(\begin{array}{l} 0.6\\1.2520240113\\0.7170741165\\0.2505659851\\-0.3764790609 \end{array}\right)$ & $0.14$ \\
        RP2 & (\ref{eq:BN}) & $\left(\begin{array}{l} 0.999\\ 1.8\\ 0.747051068928543\\ 3.979765198025580\\ 0.6 \end{array}\right)$ & $\left(\begin{array}{l} 0.4\\ 2.081142099494683\\ 0.267119045902047\\ 5.173694757433254\\ 1.069067604724276 \end{array}\right)$ & $0.1$ \\
        RP3 & (\ref{eq:BN}) & $\left(\begin{array}{l} 0.29\\2.0059425069187893 \\ 65\\ 2.0059425069187893\\ 1  \end{array}\right)$ & $\left(\begin{array}{l} 0.3\\2.0059425069187893 \\ 50\\2.0059425069187893\\ 1 \end{array}\right)$ & $0.08$ \\
        \noalign{\smallskip}\hline\noalign{\smallskip}
    \end{tabular}
     \label{tab:RP_IC}
    \end{center}
\end{table}

For the time integration, we use the three stage third-order strong-stability preserving Runge-Kutta time integration scheme of Shu and Osher \cite{shu-osher88}. We evaluate the time step with a safety factor of $\Delta t=0.9\times\lambda h$, where $\lambda$ is evaluated from 

\begin{equation*}
 \lambda\max_{j\in\mathbb{Z}}\max_{0\leq k\leq p}\big(|U_j^{k,n}+V_j^{k,n}|,|V_j^{k,n}|,\tfrac{2\epsilon_vh}{2p+1}\big) \leq 1,
\end{equation*}
\noindent for the $2\times2$ system (\ref{eq:Sys22_LD_noncons}) and from (\ref{eq:CFL_cond_iso_BN}) for the isentropic Baer-Nunziato model.

In both cases, entropy stable schemes at element interfaces are obtained by adding viscosity operators that mimic, at the discrete level, physical parabolic regularizations in the same way as done in \cite{castro_etal_13}. Let us stress that we here consider systems having nonconservative products associated with LD characteristic fields for which finite difference schemes have been shown to converge to the physically relevant solution \cite{castro_etal_08}. However the present strategy may fail for strong shocks where the agreement between regularizations at discrete and continuous levels may not be satisfied \cite{castro_etal_08}.

\subsection{Nonconservative product associated to a LD field}

Figure~\ref{fig:solution_PR_noncons_2} shows results for a $1$-shock, $2$-contact problem (RP0 in Table~\ref{tab:RP_IC}) for system (\ref{eq:Sys22_LD_noncons}). We compare solutions obtained with the entropy stable scheme (\ref{eq:semi-discr_ECPC_DGSEM}), with (\ref{eq:ECPC_intvol_func}) evaluated from the entropy conservative fluxes (\ref{eq:2_2_sys_EC_flux}), or with the original DGSEM (\ref{eq:semi-discr_DGSEM}). In both cases, we use the same entropy stable numerical fluxes (\ref{eq:2_2_sys_ES_flux}) at interfaces. The results highlight the importance of the modification of the volume integral in (\ref{eq:semi-discr_ECPC_DGSEM}) to satisfy the entropy inequality. The second order solution without this modification does not tend to the exact weak solution and contains non-physical waves even when the mesh is refined. We note that higher-order computations for $p\geq2$ without the correction (\ref{eq:ECPC_intvol_func}) were seen to blow up due to a change of sign in the $u$ component of the solution which induce a loss of strict hyperbolicity of system (\ref{eq:Sys22_LD_noncons}). The correction (\ref{eq:ECPC_intvol_func}) successfully stabilizes the computation and the numerical solution now tends to the exact entropy solution. 

\begin{figure}
\begin{center}
\subfigure[$p=1$, $N=250$]{\epsfig{figure=./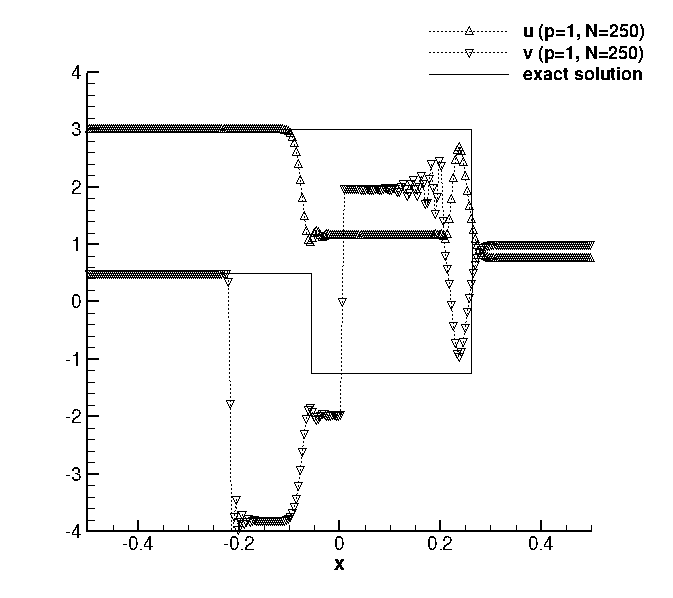,width=6cm}}
\subfigure[$p=1$, $N=2500$]{\epsfig{figure=./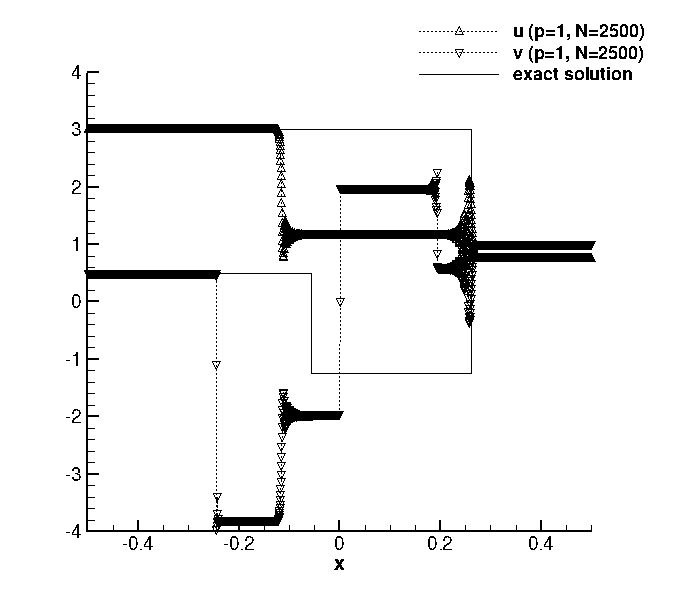,width=6cm}}\\
\subfigure[$p=1$, $N=250$ (ES)]{\epsfig{figure=./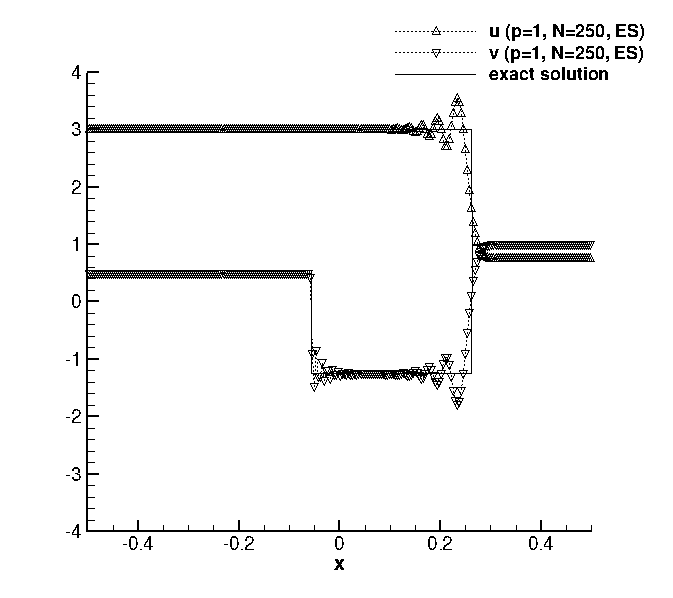,width=6cm}}
\subfigure[$p=4$, $N=100$ (ES)]{\epsfig{figure=./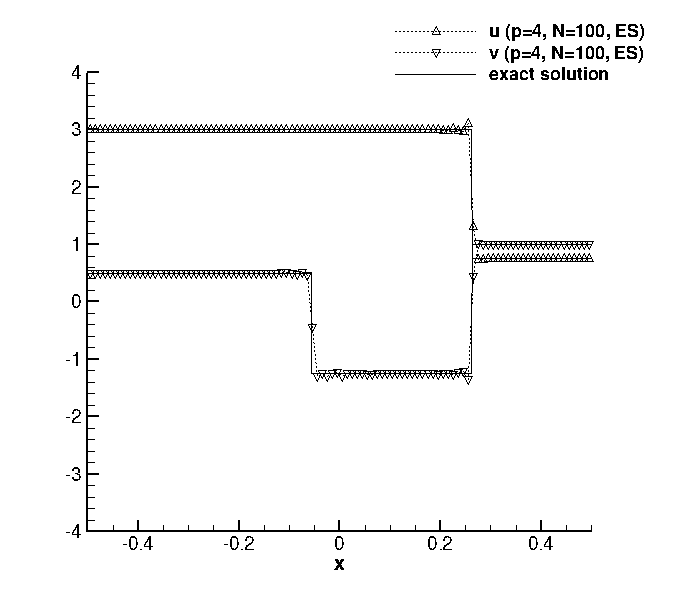,width=6cm}}
\caption{$2\times2$ system: RP0 discretized with polynomial degree $p$, $N$ cells and entropy stable (ES) modification (\ref{eq:ECPC_intvol_func}) or not.}
\label{fig:solution_PR_noncons_2}
\end{center}
\end{figure}

\subsection{Isentropic Baer-Nunziato model}

For the numerical experiments on the isentropic Baer-Nunziato model (\ref{eq:PDE_cons_noncons})-(\ref{eq:BN}), we consider polytropic ideal gas with equations of state of the form $\mathrm{p}_i(\rho_i)=\kappa\rho_i^{\gamma_i}$ with $\kappa>0$ and $\gamma_i>1$, $i=1,2$. Computations are done with the entropy stable numerical scheme (\ref{eq:discr_ECPC_DGSEM_BN}) and fourth-order accuracy, $p=3$. The limiter (\ref{eq:pos_limiter}) is applied at the end of each stage unless stated otherwise.

We first consider the advection of a discontinuity of the void fraction in uniform velocities, $u_{1,0}=u_{2,0}=1$ and pressures, $\mathrm{p}_{1,0}=\mathrm{p}_{2,0}=1$, so the mass and momentum equations in (\ref{eq:PDE_cons_noncons})-(\ref{eq:BN}) are trivially satisfied. The pressure law parameters are $\kappa=1$, $\gamma_1=1.4$, and $\gamma_2=1.2$. Figure~\ref{fig:solution_u_p_uniform_BN} presents the solution obtained at time $t=0.1$ with and without limiter. In both cases, the velocities and pressures remain uniform as expected from Lemma~\ref{th:unif_u_p_DG_BN}, but the limiter is seen to introduce numerical dissipation that smears the contact discontinuity. The design of a sharp limiter would help to improve the solution but is beyond the scope of the present study where we rather focus on stability and robustness issues.

\begin{figure}
\begin{center}
\subfigure[no limiter]{\epsfig{figure=./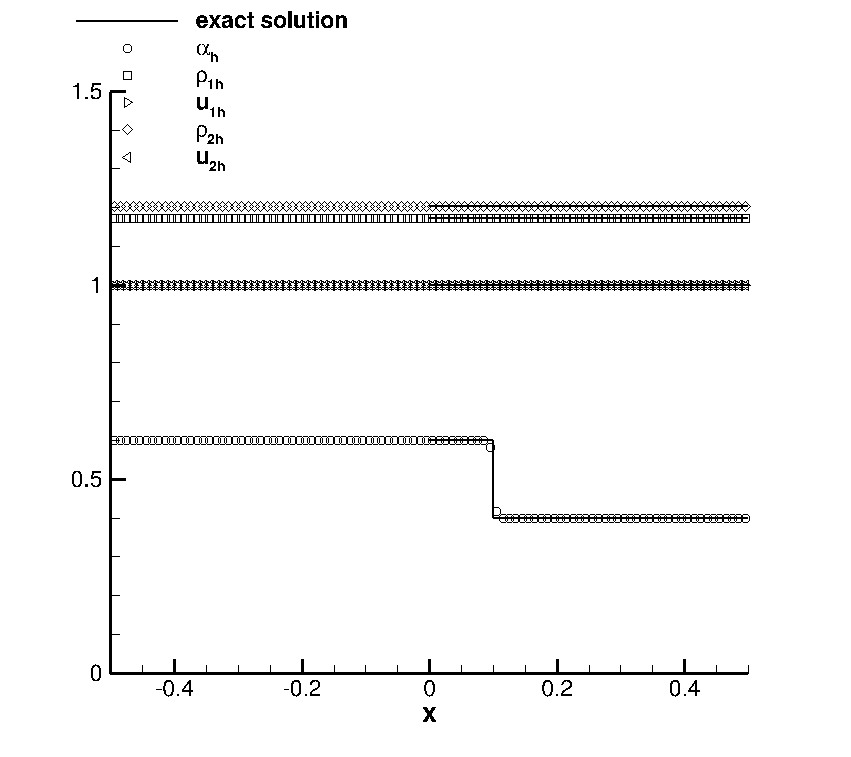,width=6cm}}
\subfigure[limiter (\ref{eq:pos_limiter})]{\epsfig{figure=./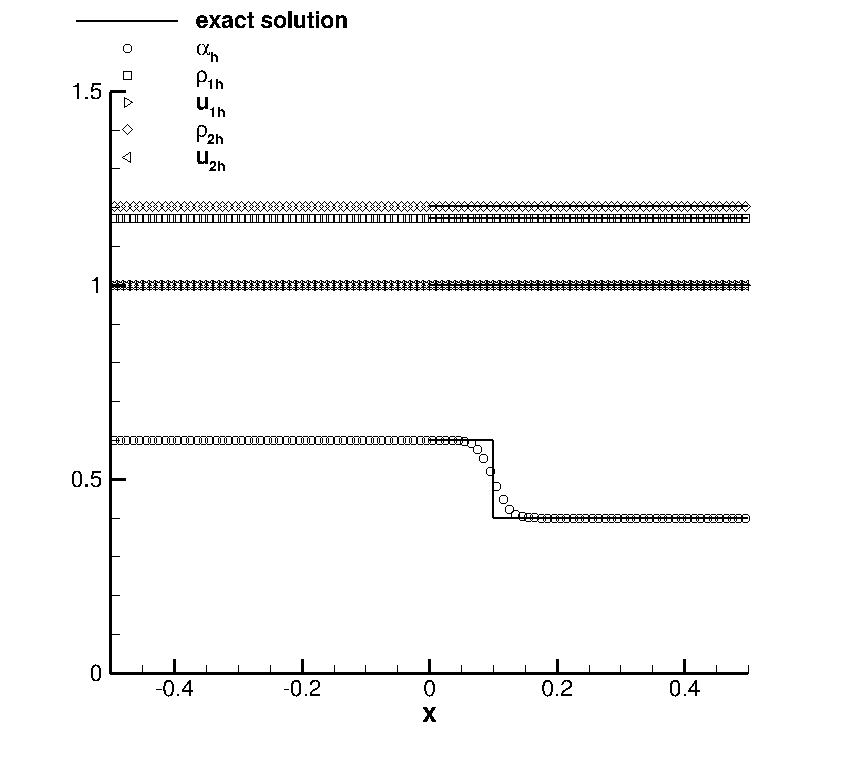,width=6cm}}
\caption{Isentropic Baer-Nunziato model: advection of a void fraction discontinuity discretized with polynomial degree $p=3$, $N=100$ cells and entropy stable scheme.}
\label{fig:solution_u_p_uniform_BN}
\end{center}
\end{figure}

Figure~\ref{fig:solution_PR_BN} presents the solution of Riemann problems associated to the initial conditions of Table~\ref{tab:RP_IC}. For RP1 and RP2, we use $\kappa=1$, $\gamma_1=3$ and $\gamma_2=1.5$. RP2 considers solutions close to resonance with a vanishing phase $2$ where $\alpha_2=10^{-3}$ and where the contact discontinuity separates a mixture region where the two phases coexist from a single phase region. The shock and rarefaction waves are well captured, while the contact wave is slightly diffused as an effect of the limiter as observed in the precedent experiment. RP3 is adapted from the experiment with large relative velocity for one pressure models in \cite{munkejord_07} and we set $\kappa=10^5$ and $\gamma_1=\gamma_2=1.4$. Spurious oscillations of low amplitude are observed in the neighborhood of the strong shocks, but the results are in good quantitative agreement with the exact solution. We stress that our experiments show that the correction (\ref{eq:ECPC_intvol_func}) of the volume integral is strongly needed for stabilizing the computations which would blow up otherwise.

\begin{figure}
\begin{bigcenter}
\subfigure{\begin{picture}(0,0) \put(-6,50){$\alpha_1$} \end{picture}}
\subfigure{\epsfig{figure=./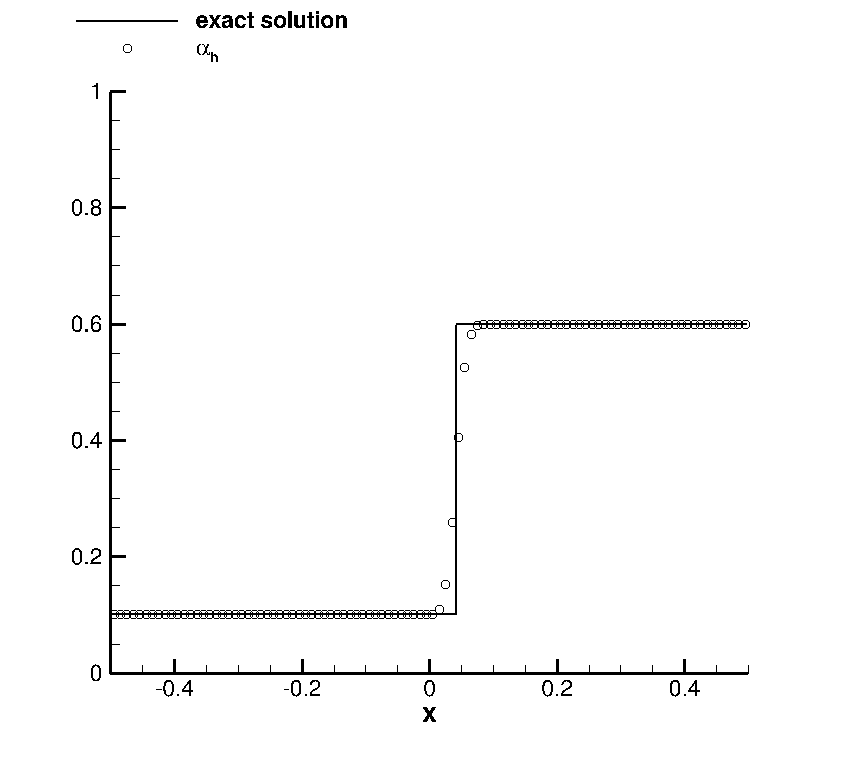,width=4cm}}
\subfigure{\epsfig{figure=./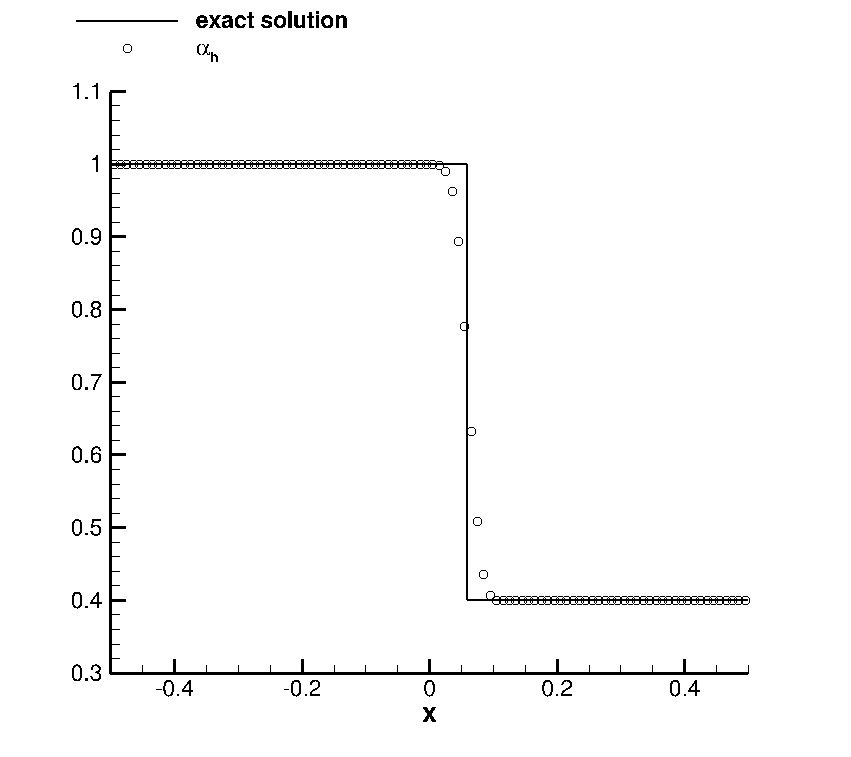,width=4cm}}
\subfigure{\epsfig{figure=./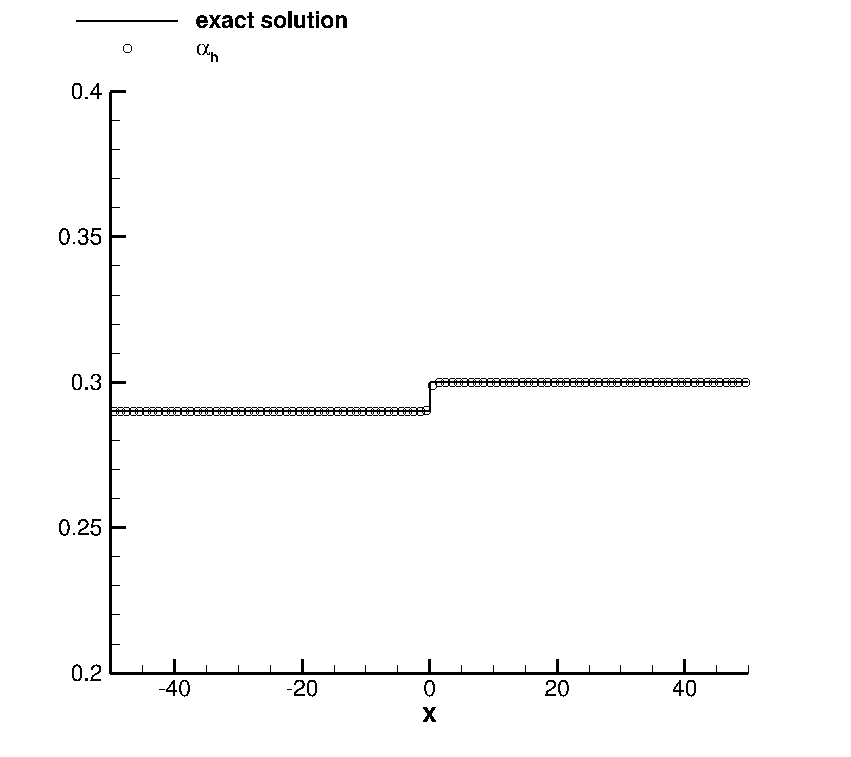,width=4cm}}\\
\subfigure{\begin{picture}(0,0) \put(-6,50){$\rho_1$} \end{picture}}
\subfigure{\epsfig{figure=./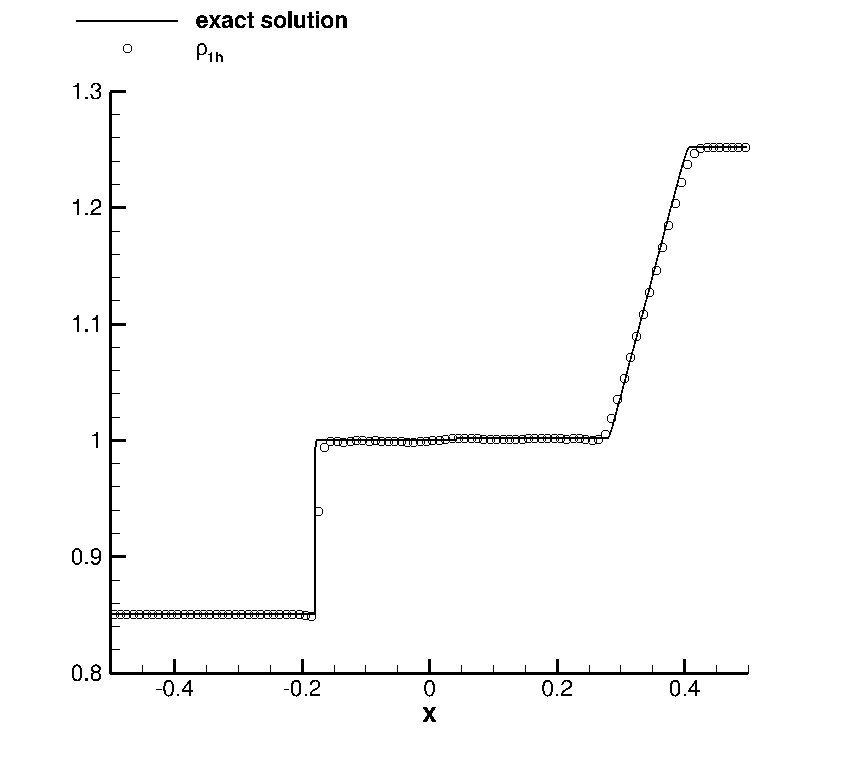 ,width=4cm}}
\subfigure{\epsfig{figure=./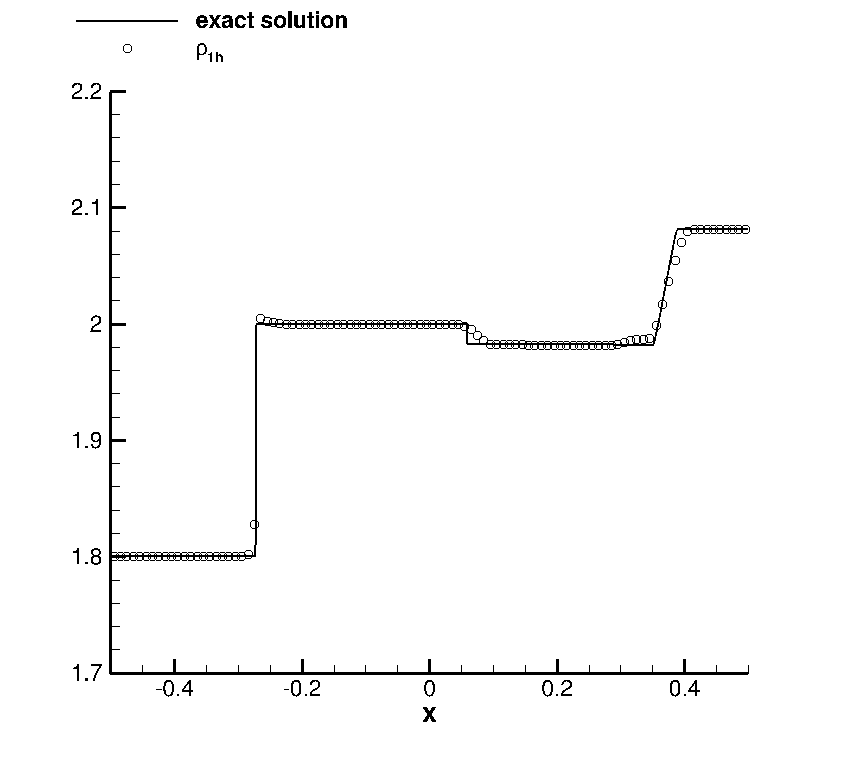 ,width=4cm}}
\subfigure{\epsfig{figure=./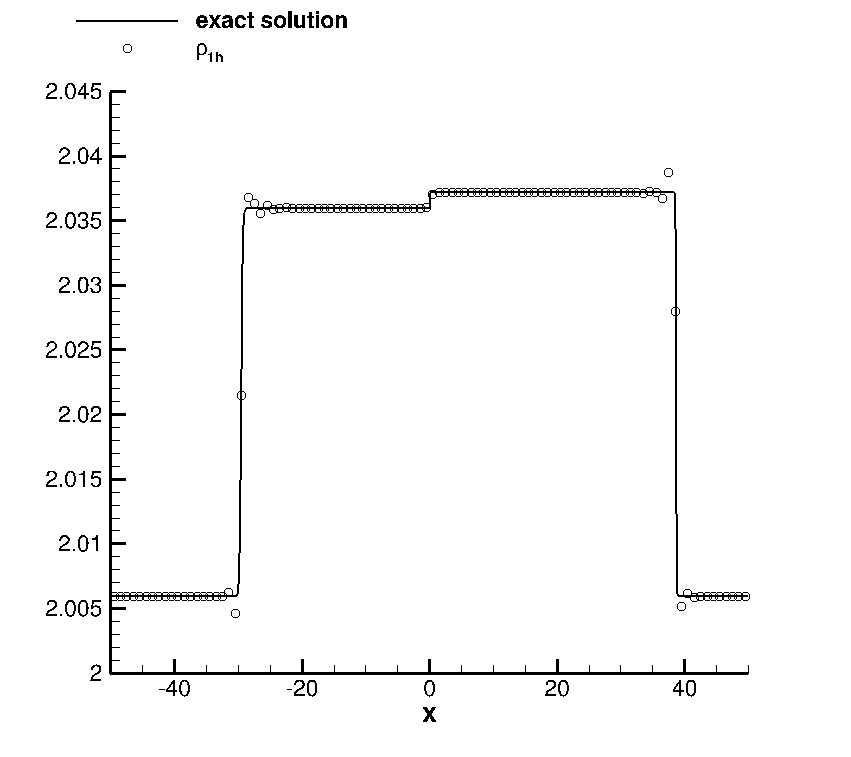 ,width=4cm}}\\
\subfigure{\begin{picture}(0,0) \put(-6,50){$u_1$} \end{picture}}
\subfigure{\epsfig{figure=./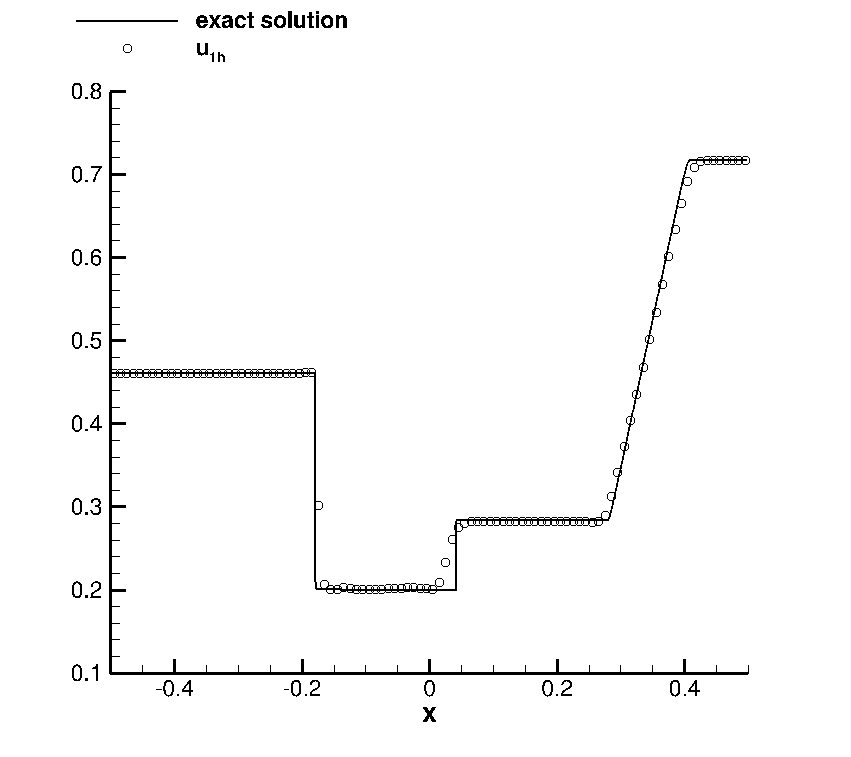   ,width=4cm}}
\subfigure{\epsfig{figure=./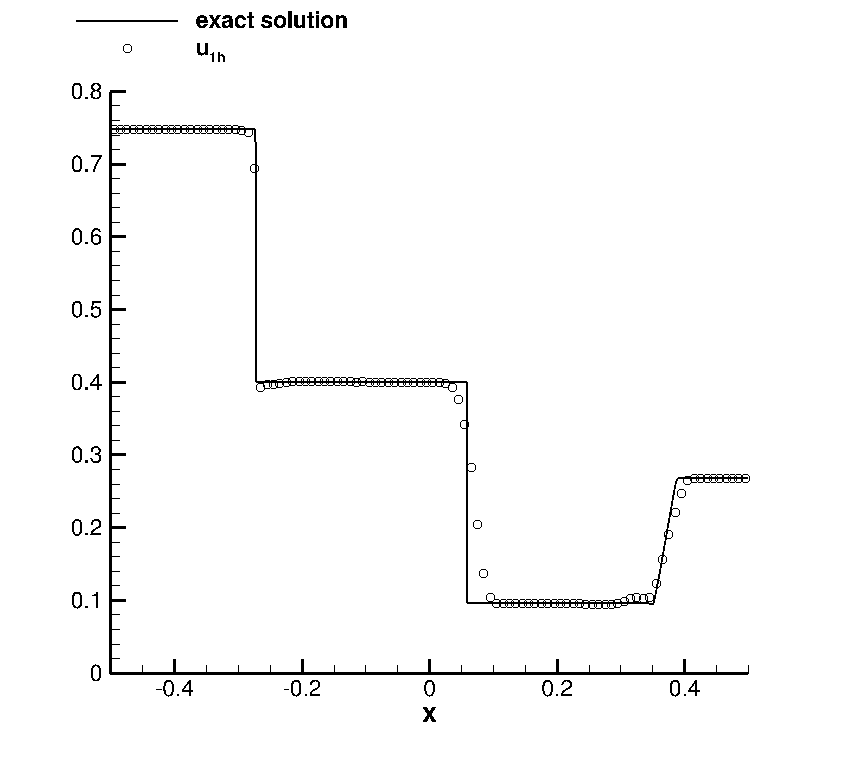   ,width=4cm}}
\subfigure{\epsfig{figure=./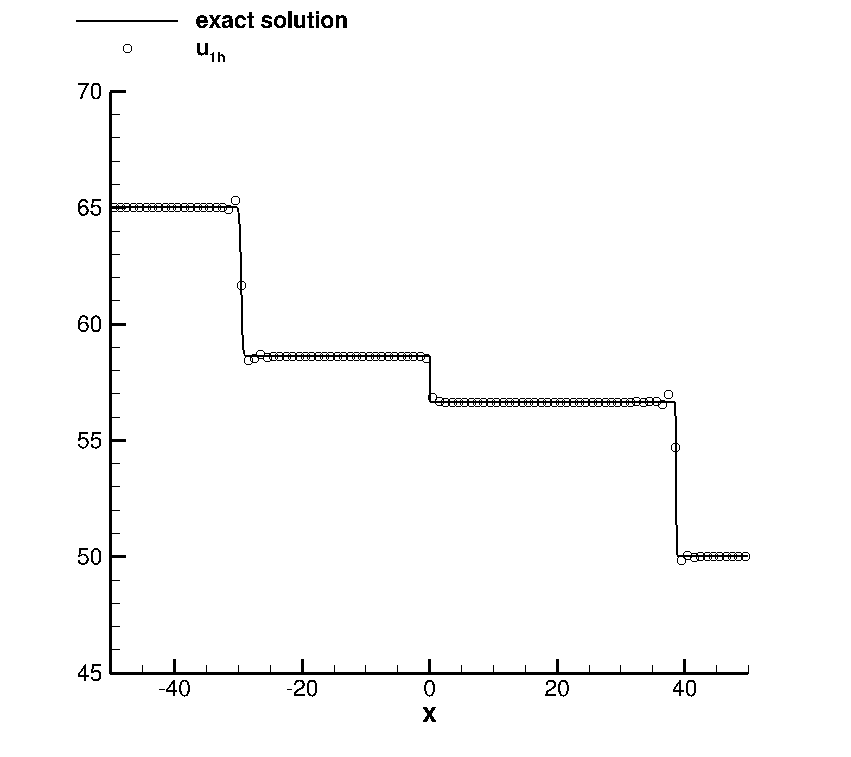   ,width=4cm}}\\
\subfigure{\begin{picture}(0,0) \put(-6,50){$\rho_2$} \end{picture}}
\subfigure{\epsfig{figure=./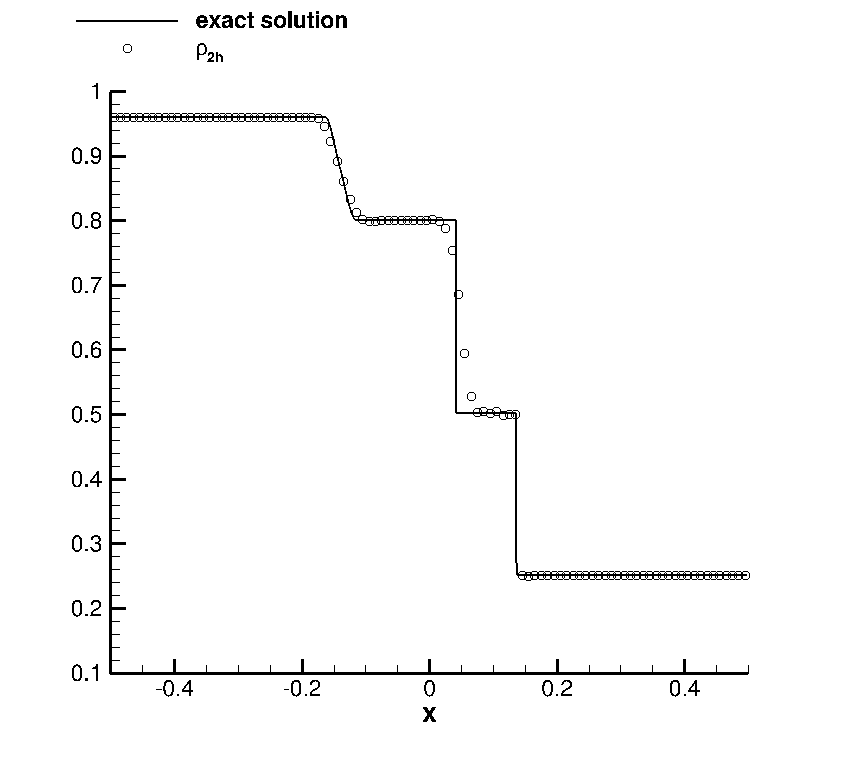 ,width=4cm}}
\subfigure{\epsfig{figure=./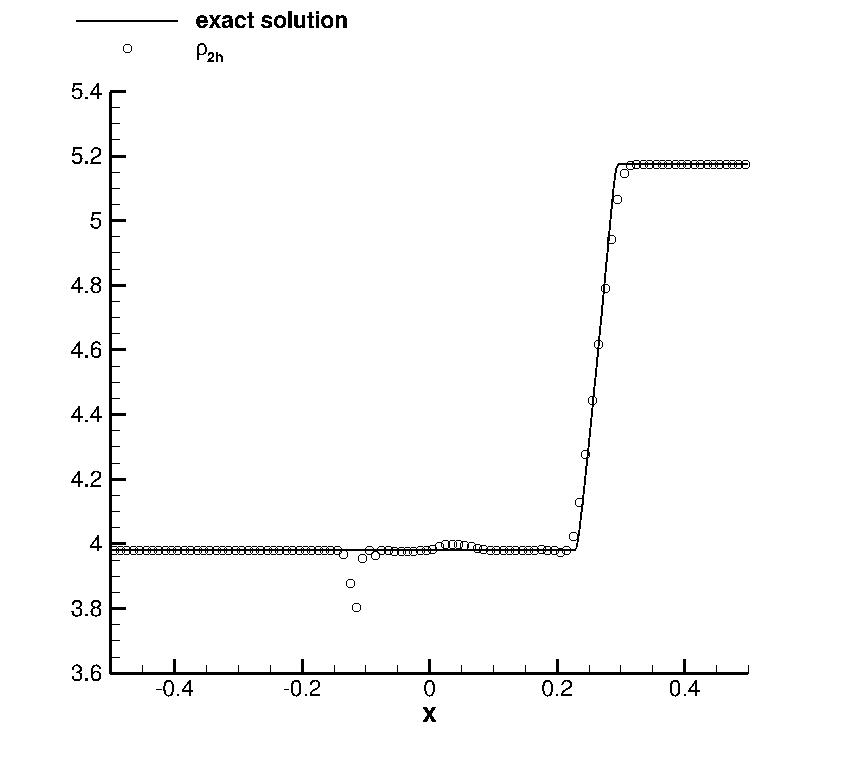 ,width=4cm}}
\subfigure{\epsfig{figure=./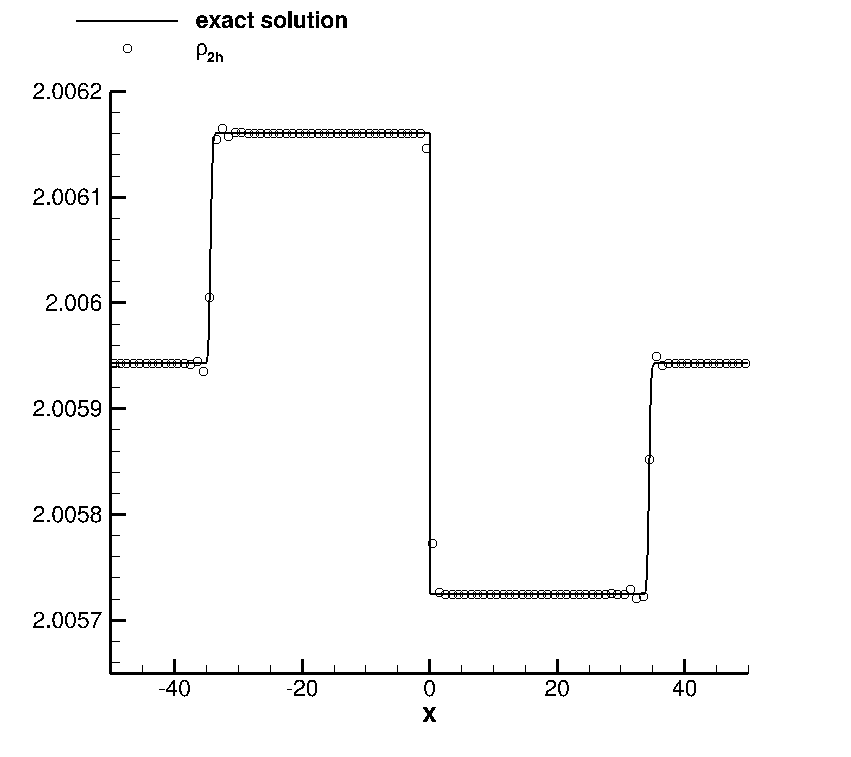 ,width=4cm}}\\
\subfigure{\begin{picture}(0,0) \put(-6,50){$u_2$} \end{picture}}
\setcounter{subfigure}{0}
\subfigure[RP1]{\epsfig{figure=./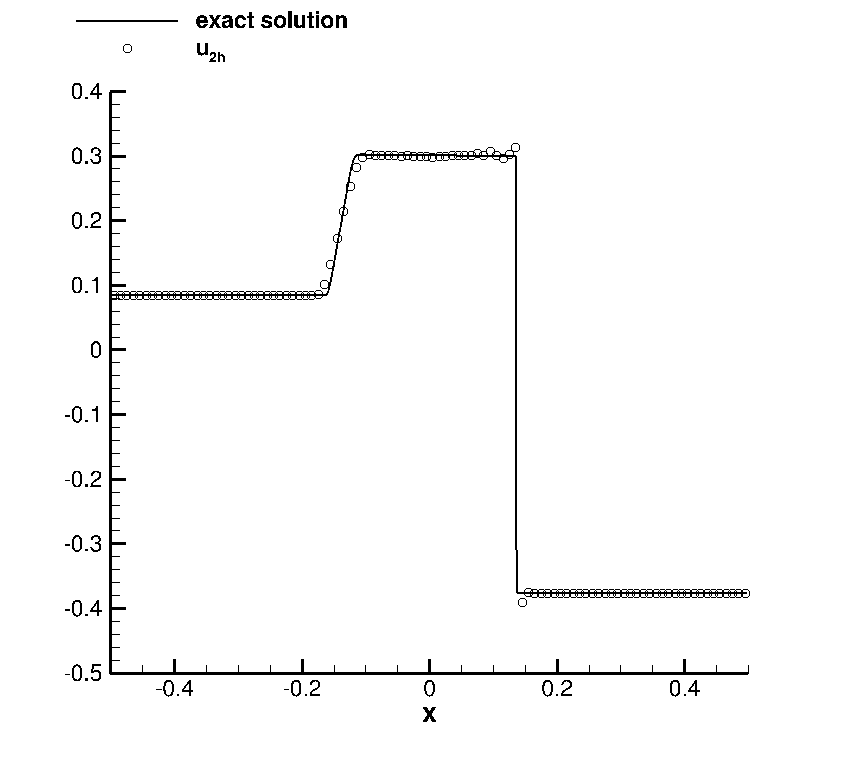,width=4cm}}
\subfigure[RP2]{\epsfig{figure=./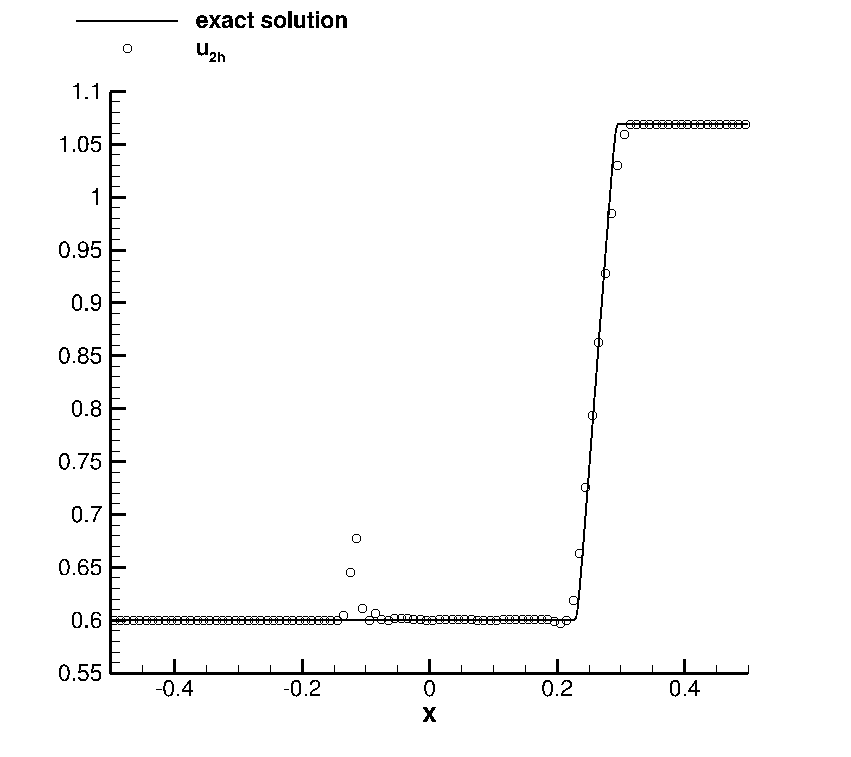,width=4cm}}
\subfigure[RP3]{\epsfig{figure=./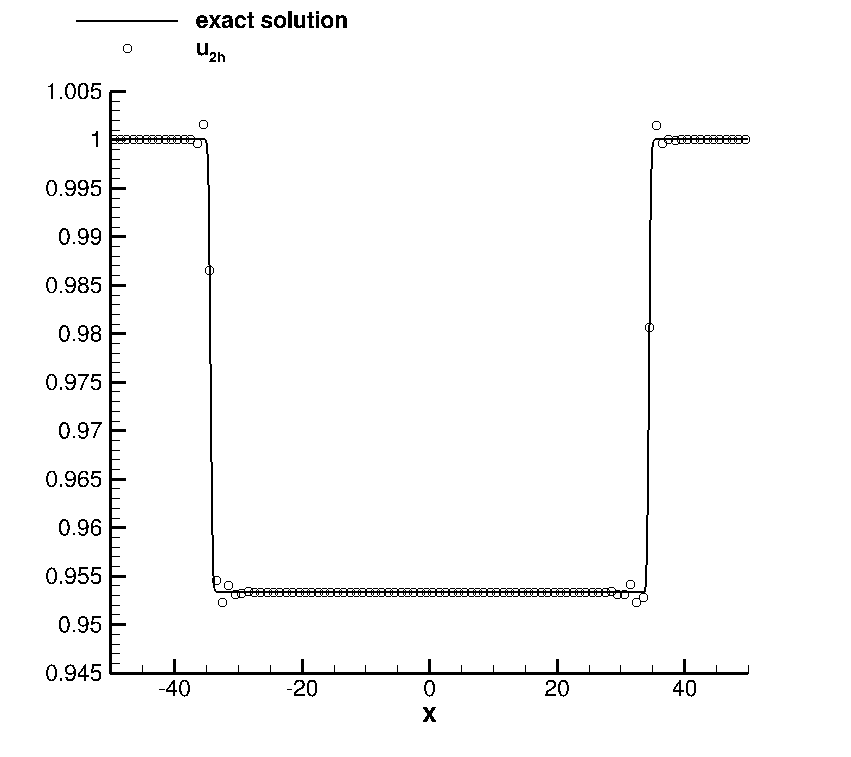,width=4cm}}
\caption{Isentropic Baer-Nunziato model: Riemann problems discretized with a polynomial degree $p=3$, $N=100$ cells and entropy stable scheme. From top to bottom: $\alpha_1$, $\rho_1$, $u_1$, $\rho_2$, and $u_2$.}
\label{fig:solution_PR_BN}
\end{bigcenter}
\end{figure}

%
%
\section{Concluding remarks}\label{sec:conclusions}

In this work, we introduce a general framework for the design of entropy stable DGSEM for the discretization of nonlinear hyperbolic systems in nonconservative form. The framework relies on the use of SBP operators and two-point entropy conservative fluctuation fluxes \cite{castro_etal_13} to evaluate the integral over discretization elements, thus removing its contribution to the global entropy production within the element, together with entropy stable fluxes at element interfaces. The framework may be seen as a generalization of the work on entropy stable DGSEM for conservation laws introduced in \cite{chen_shu_17}. In particular, the generalizations to multiple space dimensions with quadrangles, hexahedra, or simplex elements; the use of bound-preserving or TVD limiters; and the disretization of viscous terms will keep the entropy inequality as shown in \cite{chen_shu_17}.

Applications show that the methods proves to be robust, stable and entropy satisfying for the high-order discretization of two-phase flow models: a $2\times2$ system with a nonconservative product associated to a LD field and the isentropic Baer-Nunziato model. Future work will concern the improvement of the limiter to preserve contact discontinuities, the analysis of the well-balanced property, and the extension of the method to the Baer-Nunziato model with general equations of states including the transport equations for partial energies \cite{baer_nunziato86}.


%
%
%

\end{document}